\numberwithin{equation}{section}
\newtheorem{theorem}{Theorem}[section]
\newtheorem{lemma}{Lemma}[section]
\theoremstyle{definition}
\newtheorem{definition}{Definition}[section]
\theoremstyle{remark}
\newtheorem{remark}{Remark}[section]
\DeclareMathOperator{\sgn}{sign}
\begin{document}

\title[Zakharov--Kuznetsov equation]
{On initial-boundary value problems in a strip for generalized two-dimensional Zakharov--Kuznetsov equation}

\author[A.V.~Faminskii]{Andrei~V.~Faminskii}
\author[E.S.~Baykova]{Evgeniya~S.~Baykova}

\subjclass[2010]{35Q53, 35D30}

\address{Department of Mathematics, Peoples' Friendship University of Russia,
Miklukho--Maklai str. 6, Moscow, 117198, Russia}
\email{afaminskii@sci.pfu.edu.ru}
\email{baykova82@inbox.ru}

\keywords{Zakharov--Kuznetsov equation; initial-boundary value problems; weak solutions}
\date{}
\maketitle

{\scriptsize \centerline{Peoples' Friendship University of Russia, Moscow, Russia}}

\begin{abstract}
Initial-boundary value problems in a strip with different types of boundary conditions for two-dimensional generalized Zakharov--Kuznetsov equation are considered. Results on global existence and uniqueness of weak solutions in certain weighted spaces are established.
\end{abstract}

\section{Introduction. Description of main results}\label{S1}

Zakharov--Kuznetsov equation (ZK) on the plane is written as follows:
$$
u_t+u_{xxx}+u_{xyy}+uu_x=0.
$$

It has been derived in \cite{ZK} for description of ion-acoustic wave processes in plasma put in the magnetic field. Further, this equation has been considered as a model equation for  non-linear waves propagating in dispersive media in the preassigned direction $(x)$ and deformed in the transverse direction $(y)$. Zakharov--Kuznetsov equation is one of the variations of multi-dimensional generalizations for Korteweg--de~Vries equation (KdV) $u_t+u_{xxx}+uu_x=0$.

Boundary value problems for this equation (and its certain generalizations) were usually studied before in domains, which were products of an interval (bounded or non-bounded) on the variable $x$ and the whole line on the variable $y$. In particular, there were the initial value problem (\cite{S1, F89, F95, LP}), as well as initial-boundary value problems in $\mathbb R_+\times \mathbb R, \mathbb R_-\times \mathbb R,$ and $I\times \mathbb R,$ where $I$ is a bounded interval, (\cite{FB1, FB2, F08, ST, F12} and others). On the other hand, it seems more natural from the physical point of view to consider domains, where the variable $y$ varies in a bounded interval.

In the present paper we consider initial-boundary value problems in a layer $\Pi_T=(0,T)\times\Sigma$, where $\Sigma=\mathbb R\times (0,L)=\{(x,y): x\in \mathbb R, 0<y<L\}$ is a strip of a given width $L$, for generalized Zakharov--Kuznetsov equation
\begin{equation}\label{1.1}
u_t+u_{xxx}+u_{xyy}+(g(u))_x=f(t,x,y)
\end{equation}
with an initial condition 
\begin{equation}\label{1.2}
u(0,x,y)=u_0(x,y),\qquad (x,y)\in\Sigma,
\end{equation}
and boundary conditions of one of the following four types: 
for $(t,x)\in(0,T)\times\mathbb R$
\begin{equation}\label{1.3}
\begin{split}
\mbox{whether}\qquad &a)\mbox{ } u(t,x,0)=u(t,x,L)=0,\\
\mbox{or}\qquad &b)\mbox{ } u_y(t,x,0)=u_y(t,x,L)=0,\\
\mbox{or}\qquad &c)\mbox{ } u(t,x,0)=u_y(t,x,L)=0,\\
\mbox{or}\qquad &d)\mbox{ } u \mbox{ is an  $L$-periodic function with respect to $y$.}
\end{split}
\end{equation}

We use the notation "problem \eqref{1.1}--\eqref{1.3}" for each of these four cases.

The main results of this paper are theorems on existence and uniqueness of global weak solutions ($T>0$ -- arbitrary). We introduce certain growth restrictions on the function $g$ as $|u|\to\infty$ which are true for ZK equation itself.

The research of global well-posedness is based, firstly, on conservation laws for equation \eqref{1.1} (with $f\equiv 0$):
\begin{equation}\label{1.4}
\iint_{\Sigma}u^2\,dxdy=\text{const}, \mbox{  }\iint_{\Sigma}\left(u_x^2+u^2_y-g^{*}(u)\right)\,dxdy=\text{const}, 
\end{equation}
where here and further $g^{*}(u)\equiv\int_0^u g(\theta)\,d\theta$ is the primitive for $g$, which are true for each of the four cases of the boundary conditions, and secondly, on the effect of local smoothing, meaning that solutions have derivatives with respect to space variables of orders greater by one, than initial functions. For example, if $u_0\in L_2(\Sigma)$, then
$$
\int_0^T\int_{-r}^r\int_0^L(u^2_x+u^2_y)\,dydxdt<\infty
$$
for any $r>0$. Such an effect of local smoothing for the initial value problem was firstly discovered in \cite{KF, K} for KdV.

The result, based on conservation laws (1.4), on existence of global solution to the initial value problem, where $u_0\in H^1(\mathbb R^2)$, was obtained in \cite{S1} (where  equations of more general type than \eqref{1.1} were considered). Existence of global solution to the initial value problem for $u_0\in L_2(\mathbb R^2)$, based on the first conservation law \eqref{1.4} and on the effect of local smoothing, was obtained in \cite{F89} (were equations of more general type were considered too). However, in the study of uniqueness there were the growth restrictions on the function $g$, which excluded ZK equation itself.

Classes of global well-posedness for the initial-value problem for ZK equation, where the initial function $u_0\in H^k(\mathbb R^2)$ for each natural $k$, were constructed in \cite{F95} on the base of the ideas on more accurate study of properties of the linear part of the equation elaborated earlier in \cite{KPV} for KdV. Similar results for modified ZK equation ($g(u)=u^3/3$) were obtained in \cite{LP}.

The attempt of studying properties of the linear part of ZK equation for the initial-boundary value problem in the strip $\Sigma$ with periodic boundary conditions was made in \cite{LPS}, but the established estimates allowed to prove only local well-posednesss in the spaces $H^s(\Sigma)$ for $s>3/2$.

The recent paper \cite{LT}, where an initial-boundary value problem for ZK equation in a half-strip $\mathbb R_+\times (0,L)$ with zero Dirichlet boundary conditions was considered, should be also mentioned. The usage of exponential weights as $x\to +\infty$ allowed to prove there global well-posedness for such a problem in smooth function spaces.

Introduce now the following notation to describe the main results of the present paper. For an integer $k\geq 0$ let
$$
|D^k\varphi|=\Bigl(\sum_{k_1+k_2=k}(\partial^{k_1}_x\partial_y^{k_2}\varphi)^2\Bigr)^{1/2}, \qquad
|D\varphi|=|D^1\varphi|.
$$
Let $L_p=L_p(\Sigma)$, $H^k=H^k(\Sigma)$, $x_+=\max(x,0)$, $x_-=\max(-x,0)$, $\mathbb R_+=(0,+\infty)$, $\mathbb R_-=(-\infty,0)$, $\Sigma_{\pm}=\mathbb R_\pm\times (0,L)$, $\Pi^\pm_T=(0,T)\times\Sigma_{\pm}$.

For any $\alpha\geq 0$ define function spaces
$$
L_2^\alpha=L_2^\alpha(\Sigma)=\{\varphi\in L_2: (1+x_+)^\alpha\varphi\in L_2\},
$$
$$
H^{k,\alpha}=H^{k.\alpha}(\Sigma)=\{\varphi\in H^k: |D^j\varphi|\in L_2^\alpha, \  j=0,\dots,k\}
$$
with natural norms (here $H^{0,\alpha}=L_2^\alpha$).

We shall construct solutions to the considered problems in spaces $X^{k,\alpha}(\Pi_T)$, $k=0 \mbox{ or }1$, consisting of functions $u(t,x,y)$ such that 
$$
u\in C_w([0,T]; H^{k,\alpha}), \qquad \sup_{x_0\in\mathbb R}\int_0^T\! \int_{x_0}^{x_0+1}\! \int_0^L |D^{k+1}u|^2\,dydxdt<\infty
$$
(symbol $C_w$ denotes the space of  weakly continuous mappings) and if $\alpha>0$ then additionally
$$
(1+x)^{\alpha-1/2}|D^{k+1}u|\in L_2(\Pi_T^+)
$$
(let $X^\alpha(\Pi_T)=X^{0,\alpha}(\Pi_T)$).

\begin{theorem}\label{T1.1}
Let $g\in C^1(\mathbb R)$ and for certain constants $c\geq 0$ and $b\in [1,2)$
\begin{equation}\label{1.5}
|g'(u)|\leq c(1+|u|^b)\qquad \forall u\in\mathbb R.
\end{equation}
Let $u_0\in L_2^\alpha$, $f\in L_1(0,T; L_2^\alpha)$ for certain $\alpha\geq 0$ and $T>0$.
Then there exists a weak solution to each of problems \eqref{1.1}--\eqref{1.3} in the space $X^\alpha(\Pi_T)$.
\end{theorem}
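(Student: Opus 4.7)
My plan is to construct the weak solution by regularization, uniform \emph{a priori} estimates, and a compactness argument, following the standard template for Zakharov--Kuznetsov-type problems.

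\emph{Regularization and smooth approximate solutions.} I would first study the regularized equation
\[
u_t + u_{xxx} + u_{xyy} + \varepsilon(u_{xxxx} + u_{yyyy}) + (g(u))_x = f, \qquad \varepsilon > 0,
\]
with initial datum \eqref{1.2}, boundary conditions \eqref{1.3}, and, in cases (a)--(c), an additional natural boundary condition on $y = 0, L$ chosen to make the biharmonic part dissipative (e.g.\ $u = u_{yy} = 0$ in case (a)). For each fixed $\varepsilon$ this is a quasilinear parabolic problem; existence of a smooth solution $u_\varepsilon$ on $\Pi_T$ can be obtained by a Galerkin expansion in $y$ in the eigenfunctions of $-\partial_y^2$ under the appropriate boundary conditions, preceded by a truncation of the $x$-axis to $(-R,R)$ (sent to $\infty$), combined with a standard fixed-point argument for the nonlinearity.

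\emph{Uniform a priori estimates.} Multiplying the regularized equation by $u_\varepsilon$ and integrating over $\Sigma$, the third-order part $u_{xxx}+u_{xyy}$ contributes only boundary integrals on $y=0,L$ that vanish in each of the four cases (a)--(d), and $\iint (g(u_\varepsilon))_x u_\varepsilon = 0$, which yields
\[
\sup_{t\in[0,T]}\|u_\varepsilon(t,\cdot)\|_{L_2} + \varepsilon^{1/2}\|D^2 u_\varepsilon\|_{L_2(\Pi_T)} \leq C\bigl(\|u_0\|_{L_2},\|f\|_{L_1(0,T;L_2)}\bigr).
\]
The decisive further step is multiplying by $(1+x_+)^{2\alpha}\rho_n(x)\,u_\varepsilon$ with a smooth nondecreasing cutoff $\rho_n \in C^\infty_c(\mathbb{R})$ and integrating by parts; the third-order terms then produce the positive bulk contribution
\[
\iiint \bigl(\tfrac32 u_{\varepsilon,x}^2 + \tfrac12 u_{\varepsilon,y}^2\bigr)\,\partial_x\bigl((1+x_+)^{2\alpha}\rho_n(x)\bigr)\,dy\,dx\,dt,
\]
which, upon $n\to\infty$, simultaneously delivers the uniform local smoothing bound $\sup_{x_0}\int_0^T\!\!\int_{x_0}^{x_0+1}\!\!\int_0^L |Du_\varepsilon|^2\,dy\,dx\,dt < \infty$ and, when $\alpha > 0$, the weighted bound $(1+x)^{\alpha-1/2}|Du_\varepsilon| \in L_2(\Pi_T^+)$ demanded by the definition of $X^\alpha(\Pi_T)$. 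The nonlinear contribution $\iint g^{*}(u_\varepsilon)\partial_x\bigl((1+x_+)^{2\alpha}\rho_n\bigr)\,dx\,dy$ is handled using $|g^{*}(u)| \leq c(u^2 + |u|^{b+2})$, a direct consequence of \eqref{1.5}, together with interpolation between $L^\infty_t L_2^\alpha$ and the local $H^1$ bound just obtained; the hypothesis $b < 2$ (so $b+2 < 4$) is precisely what makes the super-quadratic part subordinate to the dissipation and closable by a Gronwall argument independently of $\varepsilon$.

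\emph{Passage to the limit and main obstacle.} The estimates above give weak-$*$ compactness of $\{u_\varepsilon\}$ in $L^\infty(0,T;L_2^\alpha)$ and weak compactness in $L^2(0,T;H^1_{loc})$. Bounding $\partial_t u_\varepsilon$ in a negative-order Sobolev space via the equation itself and applying Aubin--Lions, one upgrades to strong convergence $u_\varepsilon \to u$ in $L^2_{loc}(\Pi_T)$, hence in $L^{b+2}_{loc}$ by the same interpolation, which suffices to pass to the limit in the nonlinear term $\iiint g(u_\varepsilon)\phi_x\,dy\,dx\,dt$ against any test function $\phi$ compatible with \eqref{1.3}; the membership $u \in C_w([0,T];L_2^\alpha)$ then follows from the uniform bound together with the equation. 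The main technical obstacle will be the bookkeeping of boundary traces in the weighted energy identity: the $y$-integrations by parts generate traces of $u_\varepsilon$, $u_{\varepsilon,y}$, $u_{\varepsilon,yy}$, and $u_{\varepsilon,yyy}$ on $y=0,L$ that must either vanish or combine into a nonnegative boundary contribution, and this has to be checked case by case, with the mixed condition (c) the most delicate; a secondary concern is the $\varepsilon$-independence of constants in the nonlinear bootstrap at the upper edge of the admissible growth $b \nearrow 2$.
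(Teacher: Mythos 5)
Your overall strategy --- parabolic regularization, weighted energy identities exploiting the positivity of $\rho'$ to get Kato-type smoothing, interpolation to absorb the nonlinearity using $b<2$, and Aubin--Lions/Simon compactness --- is exactly the strategy of the paper, and the key estimates you describe (the $L_2$ conservation, the bulk term $\iint(3u_x^2+u_y^2)\rho'$, the bound $|g^*(u)|\le c(u^2+|u|^{b+2})$ interpolated against the local $H^1$ norm) match the paper's inequalities \eqref{2.5}, \eqref{3.6}--\eqref{3.10}. The differences are confined to the construction of the approximate solutions, and there your proposal is weaker in two respects. First, the paper regularizes with the second-order term $-\delta(u_{xx}+u_{yy})$ rather than a fourth-order one; this keeps the boundary conditions \eqref{1.3} unchanged (no extra conditions on $y=0,L$ to invent and track, which you yourself identify as the delicate point in case (c)) and lets the whole linear theory be built explicitly by Fourier series in $y$ and Fourier transform in $x$ (Lemmas~\ref{L2.1}--\ref{L2.5}). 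Second --- and this is the one genuine soft spot --- the paper also truncates the nonlinearity, replacing $g$ by $g_h$ as in \eqref{3.5} so that $g_h'$ is globally bounded; the regularized problem is then solved by a clean contraction argument (Lemma~\ref{L3.1}), and the truncation is removed at the end using $g_h=g$ on $|u|\le 1/h$ together with the strong $L_2$-convergence. You keep $g$ untruncated and appeal to ``a standard fixed-point argument,'' but with only $|g'(u)|\le c(1+|u|^b)$ the map $v\mapsto g(v)$ does not act on $C([0,T];L_2^\alpha)$, so the fixed point must instead be run in a space controlled by the $\varepsilon$-dissipation (or carried out at the Galerkin level with the $L_2$ and $\varepsilon$-weighted $H^2$ bounds); this is fixable but is precisely the step you have not supplied. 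A small further slip: a nondecreasing function in $C_c^\infty(\mathbb R)$ is identically zero, so your cutoff $\rho_n$ should be a bounded increasing weight (the paper uses $1+\rho_{0,1}(x-x_0)$ and takes the supremum over $x_0$); if $\rho_n$ were allowed to decrease, the sign of $\partial_x\bigl((1+x_+)^{2\alpha}\rho_n\bigr)$ on which your ``positive bulk contribution'' rests would be lost.
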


\begin{theorem}\label{T1.2}
Let $g\in C^2(\mathbb R)$ and for certain constants $c\geq 0$ and $b\in [1,2)$
\begin{equation}\label{1.6}
|g''(u)|\leq c(1+|u|^{b-1})\qquad \forall u\in\mathbb R.
\end{equation}
Let $u_0\in H^{1,\alpha}$, $f\in L_1(0,T; H^{1,\alpha})$ for certain $\alpha\geq0$ and $T>0$,
\begin{align*}
u_0|_{y=0}=u_0|_{y=L}=0,\mbox{  }f|_{y=0}=f|_{y=L}=0\quad &\mbox{in the case $a$},\\
u_0|_{y=0}=0, f|_{y=0}=0\quad &\mbox{in the case $c$},\\
u_0|_{y=0}=u_0|_{y=L},\mbox{  }f|_{y=0}=f|_{y=L}\quad &\mbox{in the case $d$}.
\end{align*}
Then there exists a weak solution to each of problems \eqref{1.1}--\eqref{1.3} in the space $X^{1,\alpha}(\Pi_T)$.  If $\alpha\geq 1/2$ then the solution is unique in this space.
\end{theorem}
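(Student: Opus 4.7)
The existence part will be proved by regularizing \eqref{1.1}, establishing two weighted a priori estimates suggested by the conservation laws \eqref{1.4}, and passing to the limit; uniqueness will be obtained by a weighted energy estimate applied to the difference of two solutions.

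For existence I would pass to approximate problems by a small parabolic regularization in $x$ coupled with a Galerkin projection in $y$ onto a basis respecting the prescribed boundary condition (sines for case $a$, cosines for case $b$, the appropriate mixed eigenfunctions for case $c$, complex exponentials for case $d$); the compatibility conditions imposed on $u_0$ and $f$ at $y=0,L$ are precisely what is needed for the approximate solutions to belong to $H^{1,\alpha}$. Two a priori bounds uniform in the regularization parameter $\varepsilon$ must then be established. The first is the weighted $L_2^\alpha$ bound already used for Theorem~\ref{T1.1}, obtained by testing against $u^\varepsilon\rho_N(x)$, where $\rho_N$ is a bounded smooth non-decreasing approximation of $(1+x_+)^{2\alpha}$. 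The second, which is the core of the argument, comes from testing against the multiplier $M^\varepsilon\rho_N$ with $M^\varepsilon=-2u^\varepsilon_{xx}-2u^\varepsilon_{yy}-g(u^\varepsilon)$, the multiplier naturally associated with the second conservation law in \eqref{1.4}. The time term produces $\frac{d}{dt}\iint(|Du^\varepsilon|^2-g^{*}(u^\varepsilon))\rho_N$, the dispersive terms yield a non-negative Kato-type dissipation of order $\iint|D^2u^\varepsilon|^2\rho_N'$, and the $y$-boundary traces generated along the way vanish in each case of \eqref{1.3}. The error terms involve $\rho_N'$, $\rho_N'''$ and polynomial powers of $u^\varepsilon$ coming from $g$ and $g'$; the hypothesis $b<2$ in \eqref{1.6} is exactly what allows these to be absorbed via Gagliardo--Nirenberg into the already-controlled $L_2^\alpha$- and $H^1$-norms. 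Sending first $N\to\infty$ and then $\varepsilon\to 0$, using local $L_2$-compactness to handle $g(u^\varepsilon)$, produces a solution in $X^{1,\alpha}(\Pi_T)$.

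For uniqueness under $\alpha\geq 1/2$, let $u_1,u_2\in X^{1,\alpha}(\Pi_T)$ be two solutions of the same problem and set $w=u_1-u_2$, $G(t,x,y)=\int_0^1 g'(\tau u_1+(1-\tau)u_2)\,d\tau$, so that $w$ satisfies $w_t+w_{xxx}+w_{xyy}+(Gw)_x=0$ with zero initial and boundary data. Testing against $2w\rho(x)$ with $\rho(x)=(1+x_+)^{2\alpha}$ (smoothed near the origin if convenient) and integrating by parts gives
$$
\frac{d}{dt}\iint w^2\rho + \iint(3w_x^2+w_y^2)\rho' - \iint w^2\rho''' = 2\iint Gw(w\rho)_x.
$$
Since \eqref{1.6} implies $|G|\leq c(1+|u_1|^b+|u_2|^b)$ with $b<2$, and since the local part of the definition of $X^{1,\alpha}$ together with two-dimensional Sobolev embedding on each unit $x$-strip gives $u_i\in L_2(0,T;L_\infty(\Sigma))$, the right-hand side can be absorbed into the dissipation $\iint(3w_x^2+w_y^2)\rho'$, which is of order $(1+x_+)^{2\alpha-1}$ on $\{x>0\}$ and non-degenerate at $+\infty$ precisely when $\alpha\geq 1/2$. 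The remaining contribution $\iint w^2\rho'''$ satisfies $|\rho'''|\leq C\rho$, so it is harmless and Gronwall forces $w\equiv 0$.

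The principal obstacle is the weighted $H^{1,\alpha}$ a priori estimate in the existence step: the multiplier $M^\varepsilon\rho_N$ generates cross-terms of the form $\iint g'(u^\varepsilon)u^\varepsilon_x(u^\varepsilon_{xx}+u^\varepsilon_{yy})\rho_N$ and $\iint g(u^\varepsilon)^2\rho_N'$ that nearly saturate the two-dimensional Sobolev embedding, and the strict inequality $b<2$ in \eqref{1.6} provides exactly the margin needed to close the estimate; the uniqueness argument rests on the same interpolation mechanism, which is why the additional restriction $\alpha\geq 1/2$ appears only there.
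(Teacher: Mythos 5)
Your overall strategy coincides with the paper's: parabolic regularization, a first weighted $L_2^\alpha$ estimate, a second weighted estimate generated by the multiplier associated with the second conservation law in \eqref{1.4}, interpolation exploiting $b<2$, local compactness, and a weighted Gronwall argument for uniqueness. However, two steps have genuine gaps. First, the construction of the approximate solutions does not work as stated. A Galerkin projection $P_N$ in $y$ is incompatible with your key multiplier: the component $g(u^\varepsilon)\rho_N$ does not lie in the span of the first $N$ eigenfunctions, so $\iint P_N\bigl((g(u^\varepsilon))_x\bigr)\,g(u^\varepsilon)\rho_N\,dxdy$ no longer reduces to an exact $x$-derivative and the cubic cancellation needed to close the $H^1$ bound is lost --- the standard obstruction to Galerkin schemes for non-quadratic conserved functionals. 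Moreover, under \eqref{1.6} alone $g'$ is unbounded, so before any a priori estimate can be invoked you must prove that the approximate problem is solvable at all; your sketch never does this. The paper avoids both difficulties by regularizing parabolically in both variables, truncating the nonlinearity to $g_h$ with $g_h=g$ on $[-1/h,1/h]$ and $g_h'$, $g_h''$ bounded (formula \eqref{3.5}), solving the regularized problem exactly by contraction in $C([0,t_0];H^{1,\alpha})\cap L_2(0,t_0;H^{2,\alpha})$ with the a priori bound \eqref{3.14} for global extension (Lemma~\ref{L3.2}), and only then deriving the uniform estimates \eqref{3.16}--\eqref{3.17} with the multiplier you describe (Lemmas~\ref{L2.5} and~\ref{L2.6}).

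Second, the uniqueness absorption does not close as written. From $2\iint Gw(w\rho)_x\,dxdy$ with $\rho=(1+x_+)^{2\alpha}$, Cauchy--Schwarz against the dissipation $\iint|Dw|^2\rho'\,dxdy$ leaves $\iint w^2\rho^2/\rho'\,dxdy\sim\iint w^2(1+x_+)\rho\,dxdy$, which is \emph{not} controlled by $\iint w^2\rho\,dxdy$; a bound $\|G\|_{L_\infty}\in L_1(0,T)$ cannot repair the loss of a full power of $(1+x_+)$. The mechanism that actually works (Theorem~\ref{T4.1}) is: peel off the non-decaying linear part $g'(0)w_x$, whose contribution reduces exactly to $g'(0)\iint w^2\rho'\,dxdy$; in the remainder put the $x$-derivative on the coefficient, producing factors $u_x,\widetilde u_x,u,\widetilde u$, each lying in $L_2$ with weight $(1+x_+)^{1/2}$ because the solutions belong to $H^{1,\alpha}$ with $\alpha\ge1/2$. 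It is this extra decay, combined with the interpolation inequality \eqref{1.7} applied to $\iint w^4\rho'\rho\,dxdy$, that bridges the gap between $\rho^2/\rho'$ and $\rho$. Thus $\alpha\ge1/2$ enters through the finiteness of $\iint u_x^2(1+x_+)\,dxdy$, not through ``non-degeneracy of $\rho'$ at $+\infty$'' combined with $\|G\|_{L_\infty}$. You should also justify applying the energy identity to the non-smooth difference $w$; the paper does this through the linear theory (H\"olmgren uniqueness, Lemma~\ref{L2.2}, and the energy inequality of Lemma~\ref{L2.4}).
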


Similar results for the initial value problem for generalized KdV equation were earlier obtained in \cite{F88}.

Since the constructed solutions are only weak there naturally arises a problem on solubility of the considered problems in spaces of smoother functions. This problem remains open, and one of the obstacles to resolve it is the absence of conservation laws other than \eqref{1.4} in smoother spaces in comparison, for example, with KdV equation.

Further we use the following auxiliary functions. Let $\eta(x)$ denotes a cut-off function, namely, $\eta$ is an infinitely smooth non-decreasing on $\mathbb R$ function such that $\eta(x)=0$ when $x\leq 0$, $\eta(x)=1$ when $x\geq 1$, $\eta(x)+\eta(1-x)\equiv 1$.

We say that $\rho(x)$ is an admissible weight function if $\rho$ is an infinitely smooth positive on $\mathbb R$ function such that $|\rho^{(j)}(x)|\leq c(j)\rho(x)$ for each natural $j$ and all $x\in\mathbb R$.

For each $\alpha\geq 0$ and $\beta>0$ we introduce an infinitely smooth increasing on $\mathbb R$ function $\rho_{\alpha,\beta}(x)$ as follows: 
$\rho_{\alpha,\beta}(x)=e^{\beta x}$ when $x\leq -1$, $\rho_{\alpha,\beta}(x)=(1+x)^\alpha$ for $\alpha>0$ and $\rho_{0,\beta}(x)=2-(1+x)^{-1/2}$ when $x\geq 0$, $\rho'_{\alpha,\beta}(x)>0$ when $x\in (-1,0)$. 

Note that both $\rho_{\alpha,\beta}$ and $\rho'_{\alpha,\beta}$ are admissible weight functions, and $\rho'_{\alpha,\beta}(x)\leq c(\alpha,\beta)\rho_{\alpha,\beta}(x)$ for all $x\in \mathbb R$.

Note also that if $u\in X^{k,\alpha}(\Pi_T)$ for $\alpha\geq 1/2$, then $|D^{k+1}u|\rho_{\alpha-1/2,\beta}(x)\in L_2(\Pi_T)$ for any $\beta>0$.

Further we need the following interpolating inequality.

\begin{lemma}\label{L1.1}
Let $\rho_1(x)$, $\rho_2(x)$ be two admissible weight functions such that $\rho_1(x)\leq c_0\rho_2(x)$ for some constant $c_0>0$. Let $k=1$ or $2$, $m\in [0,k)$ -- integer, $q\in [2,+\infty]$ if $k=2, m=0$ and $q\in [2,+\infty)$ in other cases. For the case $q=+\infty$  assume also that $\displaystyle{\frac{\rho_2(x_1)}{\rho_1(x_1)}\leq c_0\frac{\rho_2(x_2)}{\rho_1(x_2)}}$ if $|x_1-x_2|\leq 1$.
Then there exists a constant $c>0$ such that for every function $\psi(x,y)$, which satisfies  assumptions $|D^k\psi|\rho_1^{1/2}(x)\in L_2$, $\psi\rho_2^{1/2}(x)\in L_2$, the following inequality holds
\begin{equation}\label{1.7}
\bigl\| |D^m\psi|\rho_1^s(x)\rho_2^{1/2-s}(x)\bigr\|_{L_q} \leq c 
\bigl\| |D^k\psi|\rho_1^{1/2}(x)\bigr\|^{2s}_{L_2}
\bigl\| \psi\rho_2^{1/2}(x)\bigr\|^{1-2s}_{L_2} 
+\bigl\| \psi\rho_2^{1/2}(x)\bigr\|_{L_2},
\end{equation}
where $\displaystyle{s=\frac{m+1}{2k}-\frac{1}{kq}}$.
\end{lemma}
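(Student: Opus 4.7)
The plan is to reduce the weighted inequality to the classical (unweighted) Gagliardo--Nirenberg inequality on unit-width rectangles by a partition-of-unity argument in the $x$-variable, exploiting the fact that admissible weights are essentially constant on unit intervals. I would cover $\Sigma$ by the overlapping rectangles $\Omega_j=(j-1,j+1)\times(0,L)$, $j\in\mathbb{Z}$, and apply the standard Gagliardo--Nirenberg inequality on each bounded Lipschitz domain $\Omega_j$:
$$
\bigl\||D^m\psi|\bigr\|_{L_q(\Omega_j)}\leq c\bigl(\bigl\||D^k\psi|\bigr\|_{L_2(\Omega_j)}^{2s}\|\psi\|_{L_2(\Omega_j)}^{1-2s}+\|\psi\|_{L_2(\Omega_j)}\bigr),
$$
with a constant independent of $j$ by translation invariance in $x$. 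Here $s=(m+1)/(2k)-1/(kq)$ is exactly the standard two-dimensional interpolation exponent, and one verifies that $s\in[0,1/2]$ across every admissible triple $(k,m,q)$.

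The admissibility $|\rho_i'|\leq c\rho_i$ makes $\log\rho_i$ Lipschitz, so on each $\Omega_j$ one has $\rho_i(x)\asymp\rho_i(j)=:\rho_{i,j}$. Multiplying the local inequality above by $\rho_{1,j}^s\rho_{2,j}^{1/2-s}$ and distributing the factor as $(\rho_{1,j}^{1/2})^{2s}(\rho_{2,j}^{1/2})^{1-2s}$ inside the product produces
$$
\bigl\||D^m\psi|\rho_1^s\rho_2^{1/2-s}\bigr\|_{L_q(\Omega_j)}\leq c\bigl(A_j^{2s}B_j^{1-2s}+B_j\bigr),
$$
where $A_j=\||D^k\psi|\rho_1^{1/2}\|_{L_2(\Omega_j)}$, $B_j=\|\psi\rho_2^{1/2}\|_{L_2(\Omega_j)}$; the lower-order residual is absorbed into the $B_j$ term using $\rho_1\leq c_0\rho_2$ and $s\in[0,1/2]$, which gives $\rho_{1,j}^s\rho_{2,j}^{1/2-s}\leq c_0^s\rho_{2,j}^{1/2}$.

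To reassemble, for $q<\infty$ I would raise to the $q$-th power and sum over $j$. Bounded overlap of the cover gives $\sum_j A_j^2\leq 2\,\||D^k\psi|\rho_1^{1/2}\|_{L_2(\Sigma)}^2$, similarly for $B_j$. Discrete H\"older with conjugate exponents $1/(2s)$ and $1/(1-2s)$ applied to $\sum_j A_j^{2sq}B_j^{(1-2s)q}=\sum_j(A_j^q)^{2s}(B_j^q)^{1-2s}$, followed by the embedding $\ell^2\hookrightarrow\ell^q$ (which holds since $q\geq 2$), recovers the two global $L_2$ norms on the right-hand side of \eqref{1.7}. For the endpoint $q=\infty$ (available only when $k=2$, $m=0$) I would take suprema in $j$ in place of the sum; the additional hypothesis that $\rho_2/\rho_1$ is slowly varying ensures that $\rho_1^s\rho_2^{1/2-s}\asymp\rho_{1,j}^s\rho_{2,j}^{1/2-s}$ pointwise on $\Omega_j$ uniformly in $j$, which is precisely what is needed inside the $L_\infty$ norm.

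The sole substantive analytic input is unweighted Gagliardo--Nirenberg on a fixed rectangle; the rest is weight bookkeeping. The most delicate step is the $q=\infty$ endpoint, which cannot be summed and genuinely requires the auxiliary hypothesis on the ratio $\rho_2/\rho_1$; verifying that $s\in[0,1/2]$ across all admissible triples, picking the right H\"older pair, and keeping the $\ell^2\hookrightarrow\ell^q$ estimate consistent with the bounded overlap of the cover are routine but must be done carefully.
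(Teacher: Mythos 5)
Your proof is correct, but it is worth noting that the paper does not actually prove Lemma~\ref{L1.1} at all: its ``proof'' is a one-line deferral to \cite{F89}, where a more general interpolating inequality is established on the whole space $\mathbb R^n$, with the remark that the argument for the strip is similar. Your localization argument --- covering $\Sigma$ by the translated rectangles $\Omega_j=(j-1,j+1)\times(0,L)$, applying the unweighted Gagliardo--Nirenberg inequality on each with a $j$-independent constant, using that admissible weights satisfy $|(\log\rho_i)'|\leq c$ and hence are comparable to constants on unit intervals, and then reassembling via discrete H\"older with exponents $1/(2s)$, $1/(1-2s)$ together with $\ell^2\hookrightarrow\ell^q$ and the bounded overlap of the cover --- is a genuinely different and self-contained route. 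The exponent bookkeeping checks out: $2s$ is exactly the Nirenberg interpolation parameter in dimension two, $2s<1$ throughout the admissible range of $(k,m,q)$ (so the H\"older step is legitimate), and the excluded endpoints ($q=\infty$ for $k=1$ and for $k=2,m=1$) are precisely those where the unweighted inequality fails because $k-m-1$ is a nonnegative integer. What your approach buys is an elementary proof on the strip that requires only the classical inequality on a fixed rectangle; what the paper's citation buys is generality (arbitrary dimension and order $k$) at the cost of opacity.

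One small inaccuracy in your closing remarks: under your localization the auxiliary hypothesis on $\rho_2/\rho_1$ for $q=+\infty$ is not ``genuinely required'' --- it is automatic, since both weights are admissible and hence $|(\log(\rho_2/\rho_1))'|$ is bounded, which already gives $\rho_2(x_1)/\rho_1(x_1)\leq c\,\rho_2(x_2)/\rho_1(x_2)$ for $|x_1-x_2|\leq 1$. The hypothesis is presumably inherited from the formulation in \cite{F89}; its presence does no harm, but your argument does not actually use it beyond what admissibility already provides. This does not affect the validity of the proof.
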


\begin{proof}
If one considers the whole plane $\mathbb R^2$ instead of the strip $\Sigma$ the given inequality is a special case for a more general interpolating inequality, estimated in \cite{F89} for an arbitrary number of variables and arbitrary values of $k$. The proof in this case is similar.
\end{proof}

As a rule further we omit limits of integration in integrals over the whole strip $\Sigma$.

The paper is organized as follows. An auxiliary linear problem is considered in Section~\ref{S2}. Section~\ref{S3} is dedicated to problems on existence of solutions to the original problem. Results on continuous dependence of solutions on $u_0$ and $f$ are proved in Section~\ref{S4}. In particular, they imply uniqueness of the solution.

\section{An auxiliary linear equation}\label{S2}

Consider a linear equation 
\begin{equation}\label{2.1}
u_t+u_{xxx}+u_{xyy}-\delta(u_{xx}+u_{yy})=f(t,x,y)
\end{equation}
for a certain constant $\delta\in [0,1]$.

Introduce certain additional function spaces. Let $\EuScript S(\overline{\Sigma})$ be a space of infinitely smooth in $\overline{\Sigma}$ functions $\varphi(x,y)$ such that $\displaystyle{(1+|x|)^n|\partial^k_x\partial^l_y\varphi(x,y)|\leq c(n,k,l)}$ for any integer non-negative $n, k, l$ and all $(x,y)\in \overline{\Sigma}$. Let $\EuScript S_{exp}(\overline{\Sigma}_{\pm})$ denote a space of infinitely smooth in $\overline{\Sigma}_{\pm}$ functions $\varphi(x,y)$ such that $e^{n|x|}|\partial ^k_x\partial^l_y\varphi(x,y)|\leq c(n,k,l)$ for any integer non-negative $n, k, l$ and all $(x,y)\in \overline{\Sigma}_\pm$.

\begin{lemma}\label{L2.1}
Let $u_0\in \EuScript S(\overline{\Sigma})\cap \EuScript S_{exp}(\overline{\Sigma}_+)$, $f\in C^\infty\bigl([0,T]; \EuScript S(\overline{\Sigma})\cap \EuScript S_{exp}(\overline{\Sigma}_+)\bigr)$ and for any integer $j\geq 0$
\begin{align*}
\partial^{2j}_y u_0|_{y=0}=\partial^{2j}_y u_0|_{y=L}=0, &\mbox{  } \partial^{2j}_y f|_{y=0}=\partial^{2j}_y f|_{y=L}=0 &\mbox{  in the case  }a,\\
\partial^{2j+1}_y u_0|_{y=0}=\partial^{2j+1}_y u_0|_{y=L}=0, &\mbox{  } \partial^{2j+1}_y f|_{y=0}=\partial^{2j+1}_y f|_{y=L}=0 &\mbox{  in the case }b,\\
\partial^{2j}_y u_0|_{y=0}=\partial^{2j+1}_y u_0|_{y=L}=0, &\mbox{  }\partial^{2j}_y f|_{y=0}=\partial^{2j+1}_y f|_{y=L}=0 &\mbox{   in the case  }c,\\
\partial^{j}_y u_0|_{y=0}=\partial^{j}_y u_0|_{y=L}, &\mbox{  }\partial^{j}_y f|_{y=0}=\partial^{j}_y f|_{y=L} &\mbox{   in the case  }d.
\end{align*}
Then there exists a unique solution to each of problems \eqref{2.1}, \eqref{1.2}, \eqref{1.3} $u\in C^\infty\bigl([0,T]; \EuScript S(\overline{\Sigma})\cap \EuScript S_{exp}(\overline{\Sigma}_+)\bigr)$.
\end{lemma}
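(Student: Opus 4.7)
The plan is to reduce the problem to a countable family of one-dimensional linear problems on $\mathbb R$ by separation of variables in $y$, solve each with sharp weighted estimates, and resum. For each of the four boundary conditions I would introduce the orthonormal basis of $L_2(0,L)$ consisting of eigenfunctions of $-\partial_y^2$ compatible with the boundary behaviour: $\sqrt{2/L}\sin(n\pi y/L)$ in case $a$, $\sqrt{2/L}\cos(n\pi y/L)$ (with the constant mode) in case $b$, $\sqrt{2/L}\sin((n+1/2)\pi y/L)$ in case $c$, and $L^{-1/2}e^{2\pi i n y/L}$ in case $d$; the eigenvalues are denoted $\mu_n\geq 0$. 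The parity/periodicity compatibility conditions imposed on $u_0$ and $f$ are exactly what is needed so that repeated integration by parts in the Fourier coefficients $u_{0,n}(x)$ and $f_n(t,x)$ produces no boundary traces, hence these coefficients decay in $n$ faster than any power, uniformly with respect to all the $x$-weights appearing in $\mathcal S(\overline\Sigma)\cap\mathcal S_{\exp}(\overline\Sigma_+)$.

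For each $n$ the coefficient $u_n(t,x)$ must solve the 1D linear equation
\begin{equation*}
\partial_t u_n+\partial_x^3 u_n-\mu_n\partial_x u_n-\delta\partial_x^2 u_n+\delta\mu_n u_n=f_n(t,x),\qquad u_n(0,x)=u_{0,n}(x).
\end{equation*}
Its Fourier symbol in $x$ is $-i\xi^3+i\mu_n\xi+\delta\xi^2+\delta\mu_n$, which is dissipative for $\delta>0$ and purely oscillatory for $\delta=0$; either the explicit Fourier representation or a standard semigroup/Duhamel argument produces $u_n\in C^\infty([0,T];\mathcal S(\mathbb R))$, with all Schwartz seminorms controlled by those of $u_{0,n}$ and $f_n$. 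To obtain the exponential decay as $x\to+\infty$ I would multiply the $x$-differentiated equations by $\partial_x^ku_n\cdot e^{2\beta x}$, $\beta>0$, and integrate by parts on $\mathbb R$: the dispersive term $\partial_x^3 u_n$ produces the favourable positive quantity $3\beta\int(\partial_x^{k+1}u_n)^2e^{2\beta x}dx$ plus a cubic-in-$\beta$ zero-order remainder, the Laplacian term only helps when $\delta>0$, and the lower-order $-\mu_n\partial_x u_n$ is absorbed by Cauchy--Schwarz into $\mu_n\beta^{-1}$ times the $\partial_xu_n$ bound. Gronwall then yields bounds on $\int(\partial_x^ku_n)^2e^{2\beta x}dx$ preserving the $\mathcal S_{\exp}(\overline\Sigma_+)$ decay; polynomial decay as $x\to-\infty$ is obtained by the same argument with polynomial weights in place of $e^{2\beta x}$.

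The two series $u(t,x,y)=\sum_n u_n(t,x)e_n(y)$ and its arbitrary $t,x,y$-derivatives then converge absolutely in $\mathcal S(\overline\Sigma)\cap\mathcal S_{\exp}(\overline\Sigma_+)$, because the weighted bounds on $u_n$ depend on the data only through seminorms of $u_{0,n}$ and $f_n$ that themselves decay faster than any power of $n$; the resulting sum is a solution satisfying all boundary conditions mode by mode. Uniqueness follows from a standard $L_2$ energy identity: for any solution $u$ in the stated class the compatibility of the boundary conditions with the operator $\partial_x^3+\partial_x\partial_y^2$ eliminates boundary contributions after integration by parts in $\Sigma$, so that $\tfrac{d}{dt}\iint u^2\,dxdy=-2\delta\iint(u_x^2+u_y^2)\,dxdy+2\iint fu\,dxdy$, which for $f\equiv 0$ and $u_0\equiv 0$ gives $u\equiv 0$. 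The main obstacle is the uniformity in $n$ of the exponentially weighted Gronwall constants: the lower-order coefficient $\mu_n$ grows like $n^2$, so $\beta$ must be chosen independently of $n$ while controlling $\mu_n\beta^{-1}$ against the rapid decay of $u_{0,n},f_n$ in $n$; this is what makes the series resummable in all the seminorms of the target space.
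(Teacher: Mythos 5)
Your overall strategy --- eigenfunction expansion in $y$, explicit Fourier representation in $x$ to get smoothness and Schwartz decay, then weighted energy estimates for the exponential decay as $x\to+\infty$ --- is essentially the paper's, but two steps of your weighted estimate do not work as written. First, running Gronwall on $y_k(t)=\int(\partial_x^ku_n)^2e^{2\beta x}\,dx$ presupposes that this quantity is finite for $t>0$, which is precisely what is to be proved; a priori the Fourier formula only gives Schwartz (polynomial) decay at $+\infty$, so the differential inequality is vacuous unless the weight is truncated. The paper circumvents this by using the truncated polynomial weights $\sum_{m=0}^{n}(\alpha x)^m/m!$ on $\Sigma_+$ (with $m\ge 3$ so that no boundary terms appear at $x=0$): each such moment of a Schwartz function is finite, the bounds $z_n'\le cz_n+c$ are uniform in the truncation parameter $n$, and passing to the limit yields the exponential moment. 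You need some version of this truncation, or a capped exponential weight, to make the Gronwall step legitimate.

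Second, and more seriously, your treatment of the term $-\mu_n\partial_xu_n$ destroys the resummation. Absorbing it by Cauchy--Schwarz into the favourable $3\beta$-term forces a coefficient of order $\mu_n^2/\beta\sim n^4$ in front of $\int u_n^2e^{2\beta x}\,dx$, i.e.\ inside the Gronwall exponent, so the resulting bound carries a factor $e^{cn^4T}$; this is \emph{not} controlled by the faster-than-polynomial decay of $u_{0,n}$ and $f_n$ in $n$, contrary to your closing claim. The term should instead be integrated by parts exactly: $-2\mu_n\int u_n\,\partial_xu_n\,e^{2\beta x}\,dx=+2\mu_n\beta\int u_n^2e^{2\beta x}\,dx\ge 0$, so it has a favourable sign and contributes nothing to the Gronwall constant. (This is automatic in the paper's version, where the estimate is run on the two-dimensional derivative $v=\partial_x^k\partial_y^lu$ over $\Sigma_+$: the term $v_{xyy}$ tested against $2v\rho(x)$ produces $+\iint v_y^2\rho'\ge 0$ after integration by parts in $y$, the boundary traces vanishing under each of the four boundary conditions, so no mode-dependent constant ever appears.) With these two corrections your mode-by-mode argument closes; the remaining ingredients --- the compatibility conditions killing boundary traces in the $y$-Fourier coefficients and giving rapid decay in $n$, and the $L_2$ energy identity for uniqueness --- are fine and consistent with the paper.
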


\begin{proof}
Let $\psi_l(y)$, $l=1,2\dots$, be the orthonormal in $L_2(0,L)$ system of the eigenfunctions for the operator $(-\psi'')$ on the segment $[0,L]$ with corresponding boundary conditions   $\psi(0)=\psi(L)=0$ in the case a, $\psi'(0)=\psi'(L)=0$ in the case b, $\psi(0)=\psi'(L)=0$ in the case c, $\psi(0)=\psi(L),\psi'(0)=\psi'(L)$ in the case d, $\lambda_l$ be the corresponding eigenvalues.    
Such systems are well-known and can be written in trigonometric functions. Then with the use of Fourier transform for the variable $x$ and Fourier series for the variable $y$  a solution to  problem \eqref{2.1}, \eqref{1.2}, \eqref{1.3} can be written as follows:
$$
u(t,x,y)=\frac{1}{2\pi}\int\limits_{\mathbb R}\sum_{l=1}^{+\infty}e^{i\xi x}\psi_l(y)\widehat{u}(t,\xi,l)\,d\xi,
$$
where
$$
\widehat{u}(t,\xi,l)=\widehat{u_0}(\xi,l)e^{(\xi^3+\xi\lambda_l
-\delta(\xi^2+\lambda_l))t} \\+
\int_0^t\widehat{f}(\tau,\xi,l)e^{(\xi^3+\xi\lambda_l-\delta(\xi^2+\lambda_l))(t-\tau)}\,d\tau,
$$
$$
\widehat{u_0}(\xi,l)\equiv\iint e^{-i\xi x}\psi_l(y)u_0(x,y)\,dxdy,
$$
$$
\widehat f(t,\xi,l)\equiv\iint e^{-i\xi x}\psi_l(y) f(t,x,y)\,dxdy.
$$
According to the properties of the functions $u_0$ and $f$ this solution $u\in C^\infty([0,T],\EuScript S(\overline{\Sigma}))$.

Next, let $v\equiv \partial^k_x\partial^l_y u$ for some $k, l\geq 0$. Then the function $v$ satisfies an equation of \eqref{2.1} type, where $f$ is replaced by $\partial^k_x\partial^l_y f$. Let $m\geq 3$. Multiplying this equation by $2x^m v$ and integrating over $\Sigma_+$, we derive an inequality 
\begin{multline*}
\frac{d}{dt}\iint_{\Sigma_+}x^mv^2\, dxdy \leq m(m-1)(m-2)\iint_{\Sigma_+}x^{m-3}v^2 \,dxdy \\
+\delta m(m-1)\iint_{\Sigma_+}x^{m-2}v^2\,dxdy +2\iint_{\Sigma_+}\partial^k_x\partial^l_y fv\,dxdy.
\end{multline*}
Let $\alpha>0$, $n\geq3$. For any $m\in [3,n]$ multiplying the corresponding inequality by $\alpha^m/(m!)$ and summing by $m$ we obtain that for
$$
z_n(t)\equiv \iint_{\Sigma_+}\sum_{m=0}^n\frac{(\alpha x)^m}{m!}v^2(t,x,y)\,dxdy
$$
inequalities
$$
z_n'(t)\leq c z_n(t)+c, \quad z_n(0)\leq c,
$$
are valid uniformly with respect to $n$,
whence it follows that
$$
\sup_{t\in[0,T]}\iint_{\Sigma_+}e^{\alpha x} v^2\,dxdy<\infty.
$$
Thus, $u\in C^\infty([0,T], \EuScript S_{exp}(\overline{\Sigma}_+))$. 
\end{proof}

We now turn to generalized solutions. Let $u_0\in \bigl(\EuScript S(\overline{\Sigma})\cap \EuScript S_{exp}(\overline{\Sigma}_-)\bigr)'$, $f \in \bigl(C^\infty([0,T]; \EuScript S(\overline{\Sigma})\cap \EuScript S_{exp}(\overline{\Sigma}_-))\bigr)'$.

\begin{definition}\label{D2.1}
A function $u \in \bigl(C^\infty([0,T]; \EuScript S(\overline{\Sigma})\cap \EuScript S_{exp}(\overline{\Sigma}_-))\bigr)'$ is called a generalized solution to corresponding problem \eqref{2.1}, \eqref{1.2}, \eqref{1.3}, if for any function $\varphi \in C^\infty\bigl([0,T]; \EuScript S(\overline{\Sigma})\cap \EuScript S_{exp}(\overline{\Sigma}_-)\bigr)$ such that $\varphi|_{t=T}=0$ and $\varphi|_{y=0}=\varphi|_{y=L}=0$ for the case $a$, $\varphi_y|_{y=0}=\varphi_y|_{y=L}=0$ for the case $b$, $\varphi|_{y=0}=\varphi_y|_{y=L}=0$ for the case $c$, $\varphi|_{y=0}=\varphi|_{y=L}$, $\varphi_y|_{y=0}=\varphi_y|_{y=L}$ for the case $d$, there holds the following equality:
\begin{equation}\label{2.2}
\langle u,\varphi_t+\varphi_{xxx}+\varphi_{xyy}+\delta\varphi_{xx}+\delta\varphi_{yy}\rangle +
\langle f,\varphi\rangle +\langle u_0,\varphi|_{t=0}\rangle=0.
\end{equation}
\end{definition}

\begin{lemma}\label{L2.2}
A generalized solution to each problem \eqref{2.1}, \eqref{1.2}, \eqref{1.3} is unique.
\end{lemma}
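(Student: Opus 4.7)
My plan is a Holmgren-type duality argument. Setting $u_0 \equiv 0$ and $f \equiv 0$ in \eqref{2.2}, it suffices to show that $\langle u, \omega \rangle = 0$ for every $\omega$ eligible as a test function in Definition~\ref{D2.1}.

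The core step is to invert the adjoint terminal-value problem
\begin{equation*}
\varphi_t + \varphi_{xxx} + \varphi_{xyy} + \delta(\varphi_{xx} + \varphi_{yy}) = \omega, \qquad \varphi|_{t=T} = 0,
\end{equation*}
subject to the boundary conditions of case $a$, $b$, $c$, or $d$. I would use the reflection $\tilde\varphi(t,x,y) := \varphi(T-t, -x, y)$, which reverses time and changes the sign of odd $x$-derivatives; a short calculation shows that the adjoint problem is then equivalent to the forward problem \eqref{2.1}, \eqref{1.2}, \eqref{1.3} for $\tilde\varphi$ with right-hand side $-\omega(T-t, -x, y)$ and zero initial data. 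The reflection carries $\EuScript S_{exp}(\overline{\Sigma}_-)$ onto $\EuScript S_{exp}(\overline{\Sigma}_+)$ and preserves every $y$-boundary condition, so whenever $\omega$ also satisfies the infinite family of compatibility conditions at $y = 0, L$ required by Lemma~\ref{L2.1}, that lemma supplies a unique smooth $\tilde\varphi \in C^\infty([0,T]; \EuScript S(\overline{\Sigma}) \cap \EuScript S_{exp}(\overline{\Sigma}_+))$, and back-reflection yields an admissible test function $\varphi$ inverting the adjoint. Substituting this $\varphi$ into \eqref{2.2} immediately gives $\langle u, \omega \rangle = 0$ for every such $\omega$.

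It remains to extend this conclusion to the full test-function space of Definition~\ref{D2.1}, where only finitely many $y$-boundary conditions are imposed. I would handle this by density in the $y$-eigenbasis $\{\psi_l\}$ constructed in the proof of Lemma~\ref{L2.1}: every finite partial sum $\sum_{l \leq N} c_l(t, x)\psi_l(y)$ with smooth rapidly-decaying coefficients trivially satisfies all higher-order $y$-compatibility conditions, and the rapid decay of the Fourier coefficients of functions smooth up to $y = 0, L$ (with the relevant symmetry) yields convergence to an arbitrary admissible $\omega$ in the topology of the test-function space. The principal obstacle I expect is precisely this density step, namely, propagating the finite set of boundary conditions imposed in Definition~\ref{D2.1} to the infinite family demanded by Lemma~\ref{L2.1}, together with uniform control of the exponential and Schwartz-type weights across the truncation; the rest of the argument is then just integration by parts and a direct appeal to the explicit representation provided by Lemma~\ref{L2.1}.
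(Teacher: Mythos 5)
Your core duality computation is exactly the paper's (one-line) proof: the paper says only that uniqueness follows ``by standard H\"olmgren's argument on the basis of Lemma~\ref{L2.1}'', and your reflection $\tilde\varphi(t,x,y)=\varphi(T-t,-x,y)$, which converts the terminal-value adjoint problem into forward problem \eqref{2.1}, \eqref{1.2}, \eqref{1.3} with data in $\EuScript S(\overline{\Sigma})\cap\EuScript S_{exp}(\overline{\Sigma}_+)$ and preserves each of the four $y$-boundary conditions, is the right way to bring Lemma~\ref{L2.1} to bear. Up to that point the argument is correct.

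The gap is in your closing density step, and the justification you propose for it would fail. For an $\omega$ that is smooth up to $y=0,L$ but does \emph{not} satisfy the full infinite family of compatibility conditions of Lemma~\ref{L2.1}, the coefficients $c_l$ of the expansion $\sum_l c_l(t,x)\psi_l(y)$ decay only at a polynomial rate determined by the first compatibility condition that fails (in case $a$, integration by parts gives $c_l\sim l^{-3}\bigl(\partial_y^2\omega|_{y=0}-(-1)^l\partial_y^2\omega|_{y=L}\bigr)$ when $\omega|_{y=0,L}=0$ but $\partial_y^2\omega|_{y=0,L}\neq 0$, and only $c_l\sim l^{-1}$ if $\omega|_{y=0,L}\neq 0$). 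Hence the partial sums do not converge in the $C^\infty(\overline{\Sigma})$-type topology of the test space --- already the second $y$-derivatives fail to converge uniformly --- so continuity of the functional $\langle u,\cdot\rangle$ on that space cannot be used to pass to the limit. There is also a prior issue: to conclude $u=0$ as an element of $\bigl(C^\infty([0,T];\EuScript S(\overline{\Sigma})\cap\EuScript S_{exp}(\overline{\Sigma}_-))\bigr)'$ you must test against \emph{all} $\omega$ in the predual, not only those satisfying the boundary conditions of Definition~\ref{D2.1}. The standard way to close both gaps is to use that the solutions one actually needs to distinguish are functions: the lemma is invoked for solutions lying in $\widetilde{X}^{k,\alpha}(\Pi_T)\subset C([0,T];L_2)$ (cf.\ Lemmas~\ref{L2.3}--\ref{L2.7}), for which $\langle u,\omega\rangle=\iiint u\,\omega\,dxdydt$ is continuous with respect to a weighted $L_1(0,T;L_2)$-norm of $\omega$; in that weaker topology the finite eigenfunction combinations \emph{are} dense, so vanishing of $\iiint u\,\omega$ for the compatible $\omega$'s already forces $u=0$ a.e. If instead you insist on uniqueness in the full distributional class, testing only against $\omega$ flat at $y=0,L$ shows merely that $u$ vanishes in the open strip, and distributions supported on $\{y=0\}\cup\{y=L\}$ must be excluded by a separate argument.
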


\begin{proof} 
The proof is carried out by standard H\"olmgren's argument on the basis of Lemma~\ref{L2.1}.
\end{proof}

Now we present a number of auxiliary lemmas on solubility of the linear problems in non-smooth case.

Introduce a space $\widetilde{X}^{k,\alpha}(\Pi_T)$, that is different from $X^{k,\alpha}(\Pi_T)$ in that the condition $u\in C_w([0,T];H^{k,\alpha})$ is substituted by $u\in C([0,T];H^{k,\alpha})$, and let $\widetilde{X}^\alpha(\Pi_T)=\widetilde{X}^{0,\alpha}(\Pi_T)$. 

\begin{lemma}\label{L2.3}
Let $u_0\in L_2^\alpha$ for some $\alpha\geq 0$, $f\equiv f_0+\delta^{1/2}f_{1x}$, where $f_0\in L_1(0,T; L_2^\alpha)$, $f_1\in L_2(0,T;L_2^\alpha)$. Then there exists a (unique) generalized solution to each problem \eqref{2.1}, \eqref{1.2}, \eqref{1.3} $u(t,x,y)$ from the space $\widetilde{X}^\alpha(\Pi_T)$, and $\delta|Du|\in L_2(0,T;L_2^\alpha)$. Moreover, for any $t\in (0,T]$ uniformly with respect to $\delta$
\begin{multline}\label{2.3}
\|u\|_{\widetilde{X}^\alpha(\Pi_t)}+\delta^{1/2}\bigl\||Du|\bigr\|_{L_2(0,t;L_2^\alpha)} \\
\leq c(T) \left[\|u_0\|_{L_2^\alpha}+\|f_0\|_{L_1(0,t;L_2^\alpha)}+\|f_1\|_{L_2(0,t;L_2^\alpha)}\right],
\end{multline}
\begin{multline}\label{2.4}
\iint u^2(t,x,y)\rho(x)\,dxdy+\int_0^t\! \iint |Du|^2\cdot(\rho'+\delta\rho)\,dxdyd\tau \\
\leq \iint u_0^2\rho \,dxdy+c\int_0^t\! \iint u^2\rho\,dxdyd\tau
+2\int_0^t\!\iint f_0u\rho \,dxdyd\tau\\
-2\delta^{1/2}\int_0^t\! \iint f_1(u\rho)_x \,dxdyd\tau,
\end{multline}
where $\rho$ is an admissible weight function such that $\rho(x)\leq \text{const}\cdot (1+x_+)^{2\alpha}$ and the constant c depends on the properties of the function $\rho$.
\end{lemma}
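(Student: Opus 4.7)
The plan is to construct the solution by smooth approximation using Lemma~\ref{L2.1}, derive~\eqref{2.4} by a weighted energy estimate, deduce~\eqref{2.3} via a Gronwall argument, and then pass to the weak limit; uniqueness is immediate from Lemma~\ref{L2.2}.

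First I would approximate $u_0$, $f_0$, $f_1$ by smooth sequences $u_{0,n}$, $f_{0,n}$, $f_{1,n}$ lying in $\EuScript S(\overline{\Sigma})\cap\EuScript S_{exp}(\overline{\Sigma}_+)$ and satisfying the full tower of boundary compatibility conditions required by Lemma~\ref{L2.1} in the appropriate case $a$--$d$ (a natural construction is mollification in $x$ combined with truncation of the expansion in the eigenfunctions $\{\psi_l\}$ built in the proof of Lemma~\ref{L2.1}), arranging convergence in $L_2^\alpha$, $L_1(0,T;L_2^\alpha)$ and $L_2(0,T;L_2^\alpha)$ respectively. Let $u_n$ be the corresponding smooth solutions furnished by Lemma~\ref{L2.1} with right-hand side $f_{0,n}+\delta^{1/2}f_{1,n,x}$. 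For any admissible weight $\rho$ with $\rho(x)\le c(1+x_+)^{2\alpha}$ I would multiply~\eqref{2.1} by $2u_n\rho(x)$ and integrate over $\Sigma$. All integrations by parts in $x$ are legitimised by the rapid decay of $u_n$ as $x\to+\infty$ and the exponential decay as $x\to-\infty$, and every $y$-boundary contribution vanishes in each of the four cases (using $u|_{y=0,L}=0$, $u_y|_{y=0,L}=0$, the mixed condition, or periodicity, in a pointwise check). The standard computation gives the identity
\begin{align*}
\frac{d}{dt}\iint u_n^2\rho\,dxdy &+\iint(3u_{n,x}^2+u_{n,y}^2)\rho'\,dxdy+2\delta\iint|Du_n|^2\rho\,dxdy \\
&=\iint u_n^2(\rho'''+\delta\rho'')\,dxdy+2\iint f_{0,n}u_n\rho\,dxdy-2\delta^{1/2}\iint f_{1,n}(u_n\rho)_x\,dxdy,
\end{align*}
and since $\rho$ is admissible, $|\rho''|+|\rho'''|\le c\rho$ absorbs the first term on the right into $c\iint u_n^2\rho$; integration in $t$ is then~\eqref{2.4}.

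Next, specialising to $\rho=\rho_{\alpha,\beta}$ and splitting $(u_n\rho)_x=u_{n,x}\rho+u_n\rho'$ inside the $f_{1,n}$ integral — with the first piece absorbed into $2\delta\iint u_{n,x}^2\rho$ via Young's inequality, the second controlled by $\rho'\le c\rho$ — a Gronwall-type argument applied to $E(t)=\iint u_n^2\rho\,dxdy$ yields~\eqref{2.3} uniformly in $n$ and in $\delta\in[0,1]$. The local-smoothing portion of the $X^\alpha$ norm is recovered by repeating the computation with the translated weights $\rho_{0,1}(\cdot-x_0)$, whose derivatives are bounded below uniformly in $x_0$ on each interval $[x_0,x_0+1]$. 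A weak compactness argument extracts a limit $u\in\widetilde X^\alpha(\Pi_T)$: the estimates transfer by lower semicontinuity, the distributional identity of Definition~\ref{D2.1} is preserved under the limit, and strong continuity $u\in C([0,T];L_2^\alpha)$ is obtained by passing the energy identity to the limit. Uniqueness in $\widetilde X^\alpha(\Pi_T)$ is Lemma~\ref{L2.2}.

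The main difficulty is the uniform-in-$\delta$ handling of the $\delta^{1/2}f_{1,n,x}$ term: the precise $\delta^{1/2}$ prefactor is exactly what permits absorption into the parabolic dissipation $2\delta\iint|Du_n|^2\rho$, and one must check that no constant deteriorates as $\delta\downarrow 0$, since the very same inequality must later support the genuine $\delta=0$ problem used in Section~\ref{S3}. A secondary nuisance is verifying the compatibility conditions at $y=0,L$ in case~$c$ (mixed Dirichlet/Neumann) for the approximating data so that Lemma~\ref{L2.1} is actually applicable.
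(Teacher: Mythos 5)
Your proposal is correct and follows essentially the same route as the paper: smooth approximation via Lemma~\ref{L2.1}, multiplication by $2u\rho$ to obtain exactly the identity \eqref{2.5}, the choice $\rho=1+\rho_{0,1}(x-x_0)$ and $\rho=\rho_{2\alpha,1}$ to extract \eqref{2.3}, and passage to the limit with uniqueness from Lemma~\ref{L2.2}. The only difference is that you spell out the approximation, absorption and compactness steps that the paper leaves implicit.
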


\begin{proof}
It is sufficient to consider smooth solutions that were constructed, for example, in Lemma~\ref{L2.1} because of linearity of the problem.

Then multiplying equation \eqref{2.1} by $2u(t,x,y)\rho(x)$ and integrating over $\Sigma$ we obtain an equality
\begin{multline}\label{2.5}
\frac{d}{dt}\iint u^2\rho \,dxdy+\iint (3u_x^2+u_y^2)\rho'\,dxdy+2\delta\iint(u_x^2+u_y^2)\rho \,dxdy \\
-\iint(u^2\rho'''+\delta u^2\rho'')\,dxdy=2\iint f_0u\rho \,dxdy-2\delta^{1/2}\iint f_1(u\rho)_x \,dxdy, 
\end{multline}
which implies inequality \eqref{2.4} by the properties of admissible weight functions.

Next, let $\rho(x)\equiv 1+\rho_{0,1}(x-x_0)$ for any $x_0\in\mathbb R$ and then $\rho(x)\equiv \rho_{2\alpha,1}(x)$ if $\alpha>0$. Thus we obtain from \eqref{2.4} estimate \eqref{2.3}, which, in particular, allows us to prove Lemma in the case of non-smooth solutions.
\end{proof}

\begin{lemma}\label{L2.4}
Let $\delta = 0$, $u_0\rho_{\alpha,\beta}(x)\in L_2$ for some $\alpha>0$ and $\beta>0$, $f\equiv f_0+f_{1x}$, where $f_0\rho_{\alpha,\beta}(x)\in L_1(0,T;L_2)$, $f_1\rho_{\alpha+1/2,\beta}(x)\in L_2(\Pi_T)$. Then there exists a (unique) generalized solution to each of problems \eqref{2.1}, \eqref{1.2}, \eqref{1.3} such that $u\rho_{\alpha,\beta}(x)\in C([0,T];L_2)$, $|Du|\rho'_{\alpha,\beta}(x)\in L_2(\Pi_T)$ and inequality~\eqref{2.4}, where there is no multiplier $\delta^{1/2}$ in the last term in its right part, holds for any $t\in (0,T]$ and for $\rho\equiv\rho_{2\alpha,2\beta}$.
\end{lemma}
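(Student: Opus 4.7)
By linearity of problem \eqref{2.1}, \eqref{1.2}, \eqref{1.3} and the uniqueness of Lemma~\ref{L2.2}, it suffices to derive the weighted a priori estimate for smooth solutions and then pass to a limit. Approximate the data $u_0$, $f_0$, $f_1$ in their respective weighted norms by sequences $u_0^n$, $f_0^n$, $f_1^n$ in $\EuScript S(\overline\Sigma)\cap \EuScript S_{exp}(\overline\Sigma_+)$ satisfying the boundary compatibility conditions of Lemma~\ref{L2.1}; set $f^n\equiv f_0^n+f_{1x}^n$. Lemma~\ref{L2.1} then produces smooth solutions $u^n$.

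For each such $u^n$, multiply \eqref{2.1} (with $\delta=0$) by $2u^n\rho(x)$ with $\rho\equiv\rho_{2\alpha,2\beta}$ and integrate over $\Sigma$. The $x$-boundary terms at $\pm\infty$ vanish because $\rho$ decays exponentially as $x\to-\infty$ against the Schwartz behavior of $u^n$, while the polynomial growth of $\rho$ at $+\infty$ is dominated by the exponential decay of $u^n\in\EuScript S_{exp}(\overline\Sigma_+)$; the $y$-boundary terms at $y=0,L$ vanish under each of the conditions in \eqref{1.3} exactly as in the derivation of \eqref{2.5}. The resulting identity reads
\begin{equation*}
\frac{d}{dt}\iint u^2\rho\,dxdy+\iint(3u_x^2+u_y^2)\rho'\,dxdy-\iint u^2\rho'''\,dxdy=2\iint f_0u\rho\,dxdy-2\iint f_1(u\rho)_x\,dxdy,
\end{equation*}
and the admissibility bound $|\rho'''|\leq c\rho$ absorbs the third term into a Gronwall-type contribution, producing the announced version of \eqref{2.4}.

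To convert this identity into a quantitative estimate, I would expand $(u\rho)_x=u_x\rho+u\rho'$ and apply Cauchy--Schwarz with small parameters. The term $\iint f_1u_x\rho$ is absorbed by $\varepsilon\iint u_x^2\rho'$ at the cost of $C\iint f_1^2(\rho^2/\rho')\,dxdy$; the comparison $\rho^2/\rho'\leq c\rho_{2\alpha+1,2\beta}$ is immediate from the explicit form of the weights, since both sides behave like $(1+x)^{2\alpha+1}$ at $+\infty$ and like $e^{2\beta x}$ at $-\infty$, and this matches the hypothesis $f_1\rho_{\alpha+1/2,\beta}\in L_2(\Pi_T)$ exactly. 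The term $\iint f_1u\rho'$ is absorbed using the routine inequality $(\rho')^2\leq c\rho\rho'$ for this weight, and the $f_0$ piece is trivial. Gronwall then furnishes uniform bounds on $\|u^n\rho^{1/2}\|_{L_\infty(0,T;L_2)}$ and on $\bigl\||Du^n|(\rho')^{1/2}\bigr\|_{L_2(\Pi_T)}$ in terms of the data norms.

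A weak limit $u$ of a subsequence inherits these bounds and is readily checked to satisfy Definition~\ref{D2.1}; uniqueness is provided by Lemma~\ref{L2.2}. Continuity $u\rho_{\alpha,\beta}\in C([0,T];L_2)$ follows by applying the same identity to differences $u^n-u^m$, which become Cauchy in the relevant weighted norm. The only mildly delicate bookkeeping is in the transition region $x\in[-1,0]$, where $\rho_{\alpha,\beta}$ interpolates between exponential and polynomial regimes, but this is no real obstacle since $\rho_{2\alpha,2\beta}$ and $\rho'_{2\alpha,2\beta}$ are admissible weight functions by construction; the main genuine point of the argument is the weight comparison $\rho^2/\rho'\leq c\rho_{2\alpha+1,2\beta}$, which dictates precisely why the space $f_1\rho_{\alpha+1/2,\beta}\in L_2(\Pi_T)$ appears in the hypothesis.
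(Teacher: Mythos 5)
Your argument is correct and follows essentially the same route as the paper: approximate the data, derive the weighted $L_2$ identity with $\rho\equiv\rho_{2\alpha,2\beta}$, integrate the $f_{1x}$ term by parts, and absorb it via the comparison $\rho^2_{2\alpha,2\beta}/\rho'_{2\alpha,2\beta}\leq c\,\rho_{2\alpha+1,2\beta}$ (the paper's \eqref{2.6}), which you correctly identify as the crux and the reason for the hypothesis on $f_1$. The only cosmetic difference is that the paper regularizes the data (cutting off near $x=-\infty$) so as to invoke Lemma~\ref{L2.3} directly rather than returning to the smooth solutions of Lemma~\ref{L2.1}.
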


\begin{proof}
First of all, note that if $u\rho_{\alpha,\beta}\in C([0,T]; L_2)$ then $u$ belongs to the class $\bigl(C^\infty([0,T]; \EuScript S(\overline{\Sigma})\cap \EuScript S_{exp}(\overline{\Sigma}_-))\bigr)'$.

Next, the functions $u_0,f_0, f_1$ can be regularized such that $u_0\in L_2$, $f_0+f_{1x}\in L_1(0,T;L_2^\alpha)$ (for example, by substituting $u_0$ with the function $u_0\eta(x+1/h)$), so one can consider the solution $u\in \widetilde{X}^\alpha(\Pi_T)$. Equality \eqref{2.4} is valid for such solutions, where $f_0$ should be substituted by $f_0+f_{1x}$, and the term with $\delta^{1/2}f_1$ should be omitted. Also, it follows from \eqref{2.5} that $|Du|^2$ can be substituted  by $(3u_x^2+u_y^2)$ in the second summand of the left part in \eqref{2.4}.

Note that
\begin{equation}\label{2.6}
\frac{\rho^2_{2\alpha,2\beta}(x)}{\rho'_{2\alpha,2\beta}(x)}\leq c(\alpha,\beta)\rho_{2\alpha+1,\beta}(x), 
\end{equation}
and, therefore,
\begin{multline*}
2\iint f_{1x} u\rho_{2\alpha,2\beta} \,dxdy =-\iint f_1(u\rho_{2\alpha,2\beta})_x \,dxdy \leq \iint u_x^2\rho'_{2\alpha,2\beta}\,dxdy \\
+c\iint f_1^2\frac{\rho^2_{2\alpha,2\beta}}{\rho'_{2\alpha,2\beta}}\,dxdy +c\iint(u^2+f^2_1)\rho_{2\alpha,2\beta}\,dxdy \leq\iint u_x^2\rho'_{2\alpha,2\beta}\,dxdy\\
+c_1\iint f^2_1\rho_{2\alpha+1,2\beta}\,dxdy+ c\iint(u^2+f^2_1)\rho_{2\alpha,2\beta}\,dxdy.
\end{multline*}
\end{proof}

\begin{lemma}\label{L2.5}
Let $u_0\in H^{1,\alpha}$ for some $\alpha\geq 0$, $f\equiv f_0+\delta^{1/2}f_1$, where $f_0\in L_1(0,T;H^{1,\alpha})$, $f_1\in L_2(0,T;L_2^\alpha)$. Assume that $u_0, f_0$ satisfy the same assumptions as the ones for $u_0, f$ when $y=0, y=L$ in Theorem~\ref{T1.2}. Then there exists a (unique) generalized solution to each of problems \eqref{2.1}, \eqref{1.2}, \eqref{1.3} $u(t,x,y)$ in the space $\widetilde{X}^{1,\alpha}(\Pi_T)$, and $\delta|D^2u|\in L_2(0,T;L_2^\alpha)$ . Moreover, for any $t\in (0,T]$ uniformly with respect to $\delta$
\begin{multline}\label{2.7}
\|u\|_{\widetilde{X}^{1,\alpha}(\Pi_t)}+ \delta^{1/2}\bigl\||D^2u|\bigr\|_{L_2(0,t;L_2^\alpha)} \\
\leq c(T)\left[\|u_0\|_{H^{1,\alpha}}+\|f_0\|_{L_1(0,t;H^{1,\alpha})}+\|f_1\|_{L_2(0,t;L_2^\alpha)}\right],
\end{multline}
\begin{multline}\label{2.8}
\iint|Du(t,x,y)|^2\rho(x)\,dxdy
+\int_0^t\!\iint|D^2u|^2\cdot(\rho'+\delta\rho)\,dxdyd\tau \\
\leq \iint|Du_0|^2\rho \,dxdy+
c\int_0^t\!\iint |Du|^2\rho\,dxdyd\tau \\
+2\int_0^t\!\iint(f_{0x}u_x+f_{0y}u_y)\rho \,dxdyd\tau
-2\delta^{1/2}\int_0^t\iint f_1[(u_x\rho)_x+u_{yy}\rho]\,dxdyd\tau, 
\end{multline}
where $\rho(x)$ is the same function as in inequality \eqref{2.4}.
\end{lemma}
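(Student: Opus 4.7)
The plan is to mirror Lemma~\ref{L2.3} one derivative higher. By linearity, it suffices to prove \eqref{2.7}--\eqref{2.8} as a priori bounds on the smooth solutions supplied by Lemma~\ref{L2.1}, and then to pass to the limit in a regularization of $(u_0,f)$ preserving the stated $y$-boundary compatibility (e.g.\ truncating the expansion in the eigenfunctions $\psi_l(y)$, mollifying in~$x$, and applying the cutoff $\eta(x+1/h)$); the uniform bounds \eqref{2.7}--\eqref{2.8} then allow extraction of a weak-$*$ limit in $\widetilde{X}^{1,\alpha}(\Pi_T)$, which Lemma~\ref{L2.2} identifies as the unique generalized solution.

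For the a priori step I would multiply \eqref{2.1} by the test function $\phi\equiv -2[(u_x\rho)_x+u_{yy}\rho]$ and integrate over $\Sigma$. One integration by parts in~$x$ on the first piece and one in~$y$ on the second turn $\iint u_t\phi\,dxdy$ into $\frac{d}{dt}\iint|Du|^2\rho\,dxdy$; the $y$-boundary terms $u_tu_y\rho|_{y=0}^L$ vanish in every case $a$--$d$ (either $u_t$ or $u_y$ vanishes, or they cancel by periodicity). A longer sequence of integrations by parts converts the contribution of $u_{xxx}+u_{xyy}$ against~$\phi$ into
\[
\iint\bigl(3u_{xx}^2+2u_{xy}^2+u_{yy}^2+2u_{xx}u_{yy}\bigr)\rho'\,dxdy-\iint u_x^2\rho'''\,dxdy,
\]
and that of $-\delta(u_{xx}+u_{yy})$ into
\[
2\delta\iint\bigl(u_{xx}^2+u_{xy}^2+u_{xx}u_{yy}+u_{yy}^2\bigr)\rho\,dxdy-\delta\iint u_x^2\rho''\,dxdy,
\]
modulo $y$-boundary terms (products such as $u_{xy}u_{xx}$, $u_{xy}u_x$, $u_{xx}u_{xy}$ at $y=0,L$) that all vanish because the compatibility of Lemma~\ref{L2.1} propagates to every even or odd $y$-derivative of~$u$ as needed. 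Both quadratic forms in $(u_{xx},u_{xy},u_{yy})$ are positive definite, so they dominate a positive multiple of $|D^2u|^2$ pointwise, delivering the $\iint|D^2u|^2(\rho'+\delta\rho)$ term on the left of \eqref{2.8}; the $u_x^2\rho'''$ and $\delta u_x^2\rho''$ remainders are absorbed into $c\iint|Du|^2\rho$ via $|\rho''|,|\rho'''|\leq c\rho$ for admissible~$\rho$.

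For the forcing, integrating $f_0$ by parts against $(u_x\rho)_x+u_{yy}\rho$ (whose $y$-boundary remnant $f_0u_y\rho|_{y=0}^L$ vanishes by the stated compatibility of~$f_0$) produces the $2\iint(f_{0x}u_x+f_{0y}u_y)\rho$ term of \eqref{2.8}, while the $\delta^{1/2}f_1$ piece is left undigested, yielding the last term. Integration in~$t$ then gives \eqref{2.8}. To deduce \eqref{2.7} I would specialize~\eqref{2.8} to $\rho(x)\equiv 1+\rho_{0,1}(x-x_0)$ and take supremum over $x_0\in\mathbb R$ for the $\widetilde{X}^{1,0}$-part, then to $\rho\equiv\rho_{2\alpha,1}$ for the weighted part when $\alpha>0$ (which also furnishes $(1+x)^{\alpha-1/2}|D^2u|\in L_2(\Pi_t^+)$ via $\rho'\sim(1+x)^{2\alpha-1}$ on~$\Sigma_+$), estimate the forcing by Cauchy--Schwarz and Young, apply Gronwall, and combine with Lemma~\ref{L2.3} applied to the same data for the $\widetilde{X}^\alpha$-part of $\|u\|_{\widetilde{X}^{1,\alpha}}$.

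The main obstacle will be the bookkeeping of all the $y$-boundary terms produced by the repeated integrations by parts: in each of the four cases $a$--$d$ separately, products such as $u_{xy}u_{xx}$, $u_{xy}u_x$, $u_{xx}u_{xy}$, $f_0u_y$, $u_tu_y$ evaluated at $y=0,L$ have to be shown to vanish. This reduces to the Lemma~\ref{L2.1} principle that even $y$-derivatives of~$u$ vanish wherever $u$ does and odd $y$-derivatives wherever $u_y$ does; the rest is an $H^1$-level repetition of the computations already carried out in Lemma~\ref{L2.3}.
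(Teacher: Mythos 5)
Your proposal is correct and follows essentially the same route as the paper: the paper likewise multiplies \eqref{2.1} by $-2\bigl[(u_x\rho)_x+u_{yy}\rho\bigr]$, integrates over $\Sigma$ to get the energy identity, and then repeats the end of the proof of Lemma~\ref{L2.3} (the choices $\rho\equiv 1+\rho_{0,1}(x-x_0)$ and $\rho\equiv\rho_{2\alpha,1}$, Gronwall, and passage to the limit from the smooth solutions of Lemma~\ref{L2.1}). The only cosmetic difference is that you keep the cross term $2u_{xx}u_{yy}\rho'$ and invoke positive definiteness (which yields the coefficient $2-\sqrt{2}$ rather than $1$ in front of $\iint|D^2u|^2\rho'$), whereas one further integration by parts in $y$ converts it to $2u_{xy}^2\rho'-u_y^2\rho'''$ and gives the paper's manifestly positive form $3u_{xx}^2+4u_{xy}^2+u_{yy}^2$; either version suffices for all later uses.
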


\begin{proof}
In the smooth case multiplying \eqref{2.1} by $-2\bigl(u_x(t,x,y)\rho(x)\bigr)_x
-2u_{yy}(t,x,y)\rho(x)$ and integrating over $\Sigma$ we obtain an equality
\begin{multline*}
\frac{d}{dt}\iint(u_x^2+u_y^2)\rho \,dxdy+ 
\iint(3u_{xx}^2+4u^2_{xy}+u^2_{yy})\rho \,dxdy \\
+2\delta\iint(u^2_{xx}+2u^2_{xy}+u^2_{yy})\rho \,dxdy 
-\iint(u^2_x+u^2_y)(\rho'''+\delta\rho'')\,dxdy \\
=2\iint(f_{0x}u_x+f_{0y}u_y)\rho \,dxdy-2\delta^{1/2}\iint f_1((u_x\rho)_x+u_{yy}\rho)\,dxdy. 
\end{multline*}
The rest part of the proof is similar to the end of the proof of Lemma~\ref{L2.3}.
\end{proof}

\begin{lemma}\label{L2.6}
Let the hypothesis of Lemma~\ref{L2.5} be satisfied for some $\delta>0$.  Consider the solution $u\in \widetilde{X}^{1,\alpha}(\Pi_T)$ constructed there such that $|D^2u|\in L_2(0,T;L_2^\alpha)$. Let $\rho(x)$ be the same function as in inequalities \eqref{2.4} and \eqref{2.8}, and, moreover, $\rho(x)\geq 1$. Let the function $g(u)\in C^2(\mathbb R)$, $g(0)=0$, be such that $|g'(u)|, |g''(u)|\leq \text{const}$ $\forall u\in \mathbb R$. Then for any $t\in (0,T]$ the following equality holds: 
\begin{multline}\label{2.10}
-2\iint g^*\bigl(u(t,x,y)\bigr)\rho(x)\,dxdy
+2\int_0^t\!\iint g'(u)u_x\cdot(u_{xx}+u_{yy})\rho \,dxdyd\tau \\
+2\int_0^t\!\iint g(u)\cdot (u_{xx}+u_{yy})\rho' \,dxdyd\tau 
-2\delta\int_0^t\!\iint g'(u)\cdot(u_x^2+u_y^2)\rho\,dxdyd\tau \\
-2\delta\int_0^t\!\iint g(u)u_x\rho'\,dxdyd\tau 
=-2\iint g^*(u_0)\rho \,dxdy-2\int_0^t\!\iint fg(u)\rho \,dxdyd\tau
\end{multline}
(recall that $g^*(u)\equiv\int_0^u g(\theta)\,d\theta$).
\end{lemma}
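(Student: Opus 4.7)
My plan is to derive \eqref{2.10} as the energy-type identity produced by testing equation~\eqref{2.1} against $-2g(u)\rho(x)$ and integrating over $\Sigma\times(0,t)$. I will first carry out the computation for a smooth solution, so that Lemma~\ref{L2.1} supplies enough decay and regularity to justify every integration by parts; once the identity is established in the smooth category, an approximation argument (regularising $u_0$ and $f$ as in the proof of Lemma~\ref{L2.4}) transfers it to the class of Lemma~\ref{L2.5}, since both sides depend continuously on the data in the relevant norms when $g'$ and $g''$ are bounded.

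The computation itself is organised by the three groups of terms on the left of \eqref{2.1}. The time-derivative contribution is
\[
-2\int_0^t\!\iint u_\tau g(u)\rho\,dxdyd\tau = -2\iint g^*(u(t,\cdot))\rho\,dxdy+2\iint g^*(u_0)\rho\,dxdy,
\]
which, after transposition, provides the two $g^*$-integrals of \eqref{2.10}. The dispersive terms $u_{xxx}+u_{xyy}$ are integrated by parts once in $x$ (no $y$-boundary data are created by this step); the factor $(g(u)\rho)_x=g'(u)u_x\rho+g(u)\rho'$ that emerges, multiplied by $u_{xx}+u_{yy}$, yields precisely the second and third summands of \eqref{2.10}. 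The dissipative contribution $-\delta(u_{xx}+u_{yy})$ is integrated by parts once---in $x$ for $u_{xx}$ and in $y$ for $u_{yy}$---producing the two $\delta$-terms, and the source $f$ contributes the last summand on the right.

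The one step requiring genuine care is the $y$-integration by parts applied to $2\delta g(u)\rho u_{yy}$, which generates a boundary bracket $2\delta\int_0^t\!\int\bigl[g(u)u_y\rho\bigr]_{y=0}^{y=L}\,dxd\tau$. I will verify that it vanishes in each of the four cases of \eqref{1.3} using $g(0)=0$: in cases $a$ and $c$ (at $y=0$) the condition $u=0$ forces $g(u)=0$; in cases $b$ and $c$ (at $y=L$) the condition $u_y=0$ kills the bracket; and in case $d$ the two boundary values coincide and cancel.

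The main technical hurdle is to verify that, at the regularity of Lemma~\ref{L2.5} with $\delta>0$, every integrand in \eqref{2.10} is absolutely integrable, so that the smooth-case identity passes to the limit. The boundedness of $|g'|$ and $|g''|$ gives $|g^*(u)|\leq cu^2$ and $|g(u)|\leq c|u|$, while $\rho\leq c(1+x_+)^{2\alpha}$ together with $u\in\widetilde{X}^{1,\alpha}(\Pi_T)$ and $|D^2u|\in L_2(0,T;L_2^\alpha)$ supplies $|Du|\rho^{1/2}\in L_\infty(0,T;L_2)$ and $|D^2u|\rho^{1/2}\in L_2(\Pi_T)$; Cauchy--Schwarz then handles the cross terms $g'(u)u_x(u_{xx}+u_{yy})\rho$ and $g(u)(u_{xx}+u_{yy})\rho'$, and $g'(u)(u_x^2+u_y^2)\rho$ is controlled by $|Du|\rho^{1/2}\in L_\infty L_2$ alone. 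Together with the integrability of $fg(u)\rho$ inherited from $f\in L_1(0,T;H^{1,\alpha})$ and $g(u)\rho\in L_\infty L_2$, this closes the limit passage.
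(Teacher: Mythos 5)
Your smooth-case computation is correct and coincides with the paper's (multiply \eqref{2.1} by $-2g(u)\rho$, integrate, and check that the only $y$-boundary bracket $2\delta\int[g(u)u_y\rho]_{y=0}^{y=L}dx$ dies in all four cases); the paper dismisses this part in one line. The whole content of the paper's proof is the limit passage, and that is where your argument has a genuine gap. Showing that every integrand in \eqref{2.10} is absolutely integrable for the limit function $u$ (your ``main technical hurdle'') is necessary but not sufficient: you must show that the corresponding integrals for the smooth approximations $u_h$ \emph{converge} to those for $u$, i.e.\ you must estimate differences such as
$$\int_0^t\!\!\iint\bigl|g'(u)u_x-g'(u_h)u_{hx}\bigr|\,(|u_{xx}|+|u_{yy}|)\rho\,dxdyd\tau .$$
The piece $|g'(u)-g'(u_h)|\,|u_x|\,|D^2u|\rho\le c|u-u_h|\,|u_x|\,|D^2u|\rho$ is genuinely trilinear, and Cauchy--Schwarz with the available $L_\infty L_2$/$L_2L_2$ bounds does not close it: since $|D^2u|\rho^{1/2}$ is only in $L_2$, you need the product $|u-u_h|\,|u_x|\rho^{1/2}$ in $L_2$, i.e.\ weighted $L_4$ control of $u-u_h$ and of $u_x$. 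This is exactly what the paper supplies via the interpolation inequality \eqref{1.7} with $q=4$, $\rho_1=\rho_2=\rho$ (giving $u\rho^{1/2}\in C([0,T];L_4)$ and $u_x\rho^{1/2}\in L_2(0,T;L_4)$ from $|D^2u|\rho^{1/2}\in L_2(\Pi_T)$), and it is also the only place where the hypothesis on $g''$ is actually used, through $|g'(u)-g'(v)|\le c|u-v|$. Your proposal never invokes Lemma~\ref{L1.1}, so this step is missing.

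The gap is repairable in two ways: either insert the paper's $L_4$-interpolation estimate for the difference terms, or exploit the fact that for fixed $\delta>0$ the solution map of the linear problem is continuous from the data space into $\widetilde X^{1,\alpha}(\Pi_T)$ with $|D^2u|\in L_2(0,T;L_2^\alpha)$ (by \eqref{2.7} applied to $u-u_h$), so that $Du_h\rho^{1/2}\to Du\rho^{1/2}$ in $C([0,T];L_2)$ and $|D^2u_h|\rho^{1/2}\to|D^2u|\rho^{1/2}$ in $L_2(\Pi_T)$ strongly; combined with $|g'(u_h)|\le c$ and a.e.\ convergence, a standard trilinear convergence argument then applies. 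Either route should be written out; as it stands, ``this closes the limit passage'' asserts precisely the point that needs proof.
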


\begin{proof}. In the smooth case multiplying \eqref{2.1} by $-2g\bigl((u(t,x,y)\bigr)\rho(x)$ and integrating one instantly obtains equality \eqref{2.10}.

In order to obtain this equality in general case we use the passage to the limit. In this case, the presence of the terms of order higher than quadratic requires appropriate justification for this procedure. Note that $|g(u)|\leq c|u|$, $|g^*(u)|\leq cu^2$, $|g^*(u)-g^*(v)|\leq c(|u|+|v|)|u-v|$, $|g(u)-g(v)|\leq c|u-v|$, $|g'(u)-g'(v)|\leq c|u-v|$. 

Then, for example, if we denote by $u_h$ the corresponding smooth solution
\begin{multline*}
\int_0^t\!\iint |g'(u)-g'(u_h)|\cdot|u_x|\cdot(|u_{xx}|+|u_{yy}|)\rho \,dxdyd\tau \\
\leq c\int_0^t\!\iint |u-u_h|\cdot|u_x|\cdot(|u_{xx}|+|u_{yy}|)\rho^{3/2}\,dxdyd\tau \\
\leq c_1\sup_{\tau\in[0,t]}\Bigl(\iint|u-u_h|^4\rho^2\,dxdy\Bigr)^{1/4} \Bigl(\int_0^t\Bigl(\iint u_x^4\rho^2 \,dxdy\Bigr)^{1/2}\,d\tau\Bigr)^{1/2} \\
\times\Bigl(\int_0^t\!\iint |D^2u|^2\rho\, dxdyd\tau\Bigr)^{1/2}.
\end{multline*}
Since $u\rho^{1/2}, |Du|\rho^{1/2}\in C([0,T]; L_2)$, then $u\rho^{1/2}\in C([0,T];L_4)$ according to inequality \eqref{1.7} (for $q=4$, $\rho_1=\rho_2=\rho$) . Moreover, $|D^2u|\rho^{1/2}\in L_2(\Pi_T)$ and so $u_x\rho^{1/2}\in L_2(0,T; L_4)$. Thus, the passage to the limit is justified in this case. The other terms can be considered in a similar way.
\end{proof}

\begin{lemma}\label{L2.7}
Let $\delta=0$, $u_0\rho_{\alpha,\beta}(x)\in H^1$ for some $\alpha>0$ and $\beta>0$, $f\equiv f_0+f_1$, where $f_0\rho_{\alpha,\beta}(x)\in L_1(0,T;H^1)$, $f_1\rho_{\alpha+1/2,\beta}(x)\in L_2(\Pi_T)$. Assume that $u_0$ and $f_0$ satisfy the same assumptions as the ones for $u_0$, $f$ when $y=0, y=L$ in Theorem~\ref{T1.2}. Then there exists a (unique) generalized solution to each of problems \eqref{2.1}, \eqref{1.2}, \eqref{1.3} $u(t,x,y)$ such that  $u\rho_{\alpha,\beta}(x)\in C([0,T];H^1)$, $|D^2u|\rho'_{\alpha,\beta}(x)\in L_2(\Pi_T)$ and for any $t\in (0,T]$ inequality \eqref{2.8} holds for $\rho\equiv\rho_{2\alpha,2\beta}$, where there is no multiplier $\delta^{1/2}$ in the last term in the right part, and there should be a positive coefficient less than 1 before the second term in the left part. 
\end{lemma}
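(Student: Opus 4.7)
The plan is to mirror the passage from Lemma~\ref{L2.3} to Lemma~\ref{L2.4}, applied at the $H^1$ level of Lemma~\ref{L2.5}. First I would truncate the data by multiplication by $\eta(x+1/h)$ (preserving the $y=0,L$ boundary conditions required in Theorem~\ref{T1.2}); after truncation $f_{0,h}+f_{1,h}$ has compact $x$-support and lies in $L_1(0,T;H^{1,\alpha})$, so Lemma~\ref{L2.5} with $\delta=0$ applies, yielding approximating solutions $u_h\in\widetilde X^{1,\alpha}(\Pi_T)$.

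For each $u_h$ I would then derive the weighted $H^1$ identity by multiplying \eqref{2.1} (with $\delta=0$) by $-2(u_{h,x}\rho)_x-2u_{h,yy}\rho$, where $\rho\equiv\rho_{2\alpha,2\beta}$, exactly as in the opening of the proof of Lemma~\ref{L2.5}. The new feature is the handling of the $f_1$ contribution: since $f_1$ is only $L_2$-controlled (with the weight $\rho_{\alpha+1/2,\beta}$), I must not shift the $x$-derivative onto $f_1$ as was done for $f_0$. Instead, expanding $(u_{h,x}\rho)_x=u_{h,xx}\rho+u_{h,x}\rho'$ and applying Cauchy--Schwarz with a small $\varepsilon>0$, I would bound $\iint f_{1,h}u_{h,xx}\rho$ by $\varepsilon\iint u_{h,xx}^2\rho'+c(\varepsilon)\iint f_{1,h}^2\rho^2/\rho'$, and similarly for the $u_{h,yy}$ piece, invoking the decisive inequality \eqref{2.6}, i.e.\ $\rho^2/\rho'\leq c\,\rho_{2\alpha+1,2\beta}$, to convert the $f_{1,h}$-remainder into a quantity bounded by $\|f_1\rho_{\alpha+1/2,\beta}\|_{L_2(\Pi_T)}^2$. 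The cross term $\iint f_{1,h}u_{h,x}\rho'$ is handled analogously via $(\rho')^2/\rho\leq c\,\rho_{2\alpha+1,2\beta}$, contributing only a Gronwall-type piece $\iint u_{h,x}^2\rho$.

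For $\varepsilon$ small enough, these absorptions leave a prefactor $1-\varepsilon<1$ in front of $\iint|D^2u_h|^2\rho'$ on the left-hand side---exactly the strengthened form of \eqref{2.8} asserted in the statement---and Gronwall's inequality (using $|\rho'''|\leq c\rho'\leq c\rho$) closes a uniform-in-$h$ a priori bound in the claimed weighted norms. The passage to the limit $h\to\infty$ is then standard: weak-$*$ compactness yields a limit $u$ with $u\rho_{\alpha,\beta}\in L_\infty(0,T;H^1)$ and $|D^2u|\rho'_{\alpha,\beta}\in L_2(\Pi_T)$; the equation itself controls $u_t$ and upgrades the time regularity to $u\rho_{\alpha,\beta}\in C([0,T];H^1)$; and uniqueness follows from Lemma~\ref{L2.2}.

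The main obstacle is precisely the bookkeeping of weights in the $f_1$ estimate: the half-power gain from $\rho_{\alpha,\beta}$ on $u$ to $\rho_{\alpha+1/2,\beta}$ on $f_1$ must match exactly the gain encoded in \eqref{2.6}, and any mismatch would either force stronger hypotheses on $f_1$ or demand a total absorption of the $|D^2 u|^2\rho'$ term, which would destroy the promised coefficient strictly less than one.
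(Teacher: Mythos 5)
Your proposal is correct and follows essentially the same route as the paper: regularize the data so that Lemma~\ref{L2.5} applies with $f_0+f_1$ in place of $f_0$, then handle the $f_1$ contribution by keeping the derivatives on $u$ (rather than on $f_1$) and using the weight inequality \eqref{2.6} together with a small-$\varepsilon$ absorption of the $|D^2u|^2\rho'$ part, which is exactly what produces the coefficient strictly less than one on the left. The only caveats are cosmetic: truncation by $\eta(x+1/h)$ alone does not place $f_1$ in $L_1(0,T;H^{1,\alpha})$ (a mollification is also needed, as the paper's word ``regularize'' implies), the truncated data are not compactly supported in $x$ but merely lie in the weighted classes, and the limit is $h\to 0$ rather than $h\to\infty$; none of this affects the argument.
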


\begin{proof}
Regularize the functions $u_0$, $f_0$, $f_1$ such that $u_0$, $f$ satisfy the hypothesis of Lemma~\ref{L2.5}, where $f_0$ is substituted by $f_0+f_1$, and $f_1\equiv 0$ (we can also assume that when $y=0, y=L$ the assumptions on the function $f_0$ are also true for the function $f_1$). Let us consider the corresponding solution $u\in \widetilde{X}^{1,\alpha}(\Pi_T)$ and corresponding inequality \eqref{2.8}, where $f_0$ is substituted by $f_0+f_1$ and there is no term $\delta^{1/2}f_1$. 

Since 
\begin{multline*}
\Bigl|\int_0^t\!\iint(f_{1x}u_x+f_{1y}u_y)\rho_{2\alpha,2\beta}\,dxdyd\tau\Bigr| \\
=\Bigl|\int_0^t\!\iint f_1\bigl[(u_x\rho_{2\alpha,2\beta})_x
+u_{yy}\rho_{2\alpha,2\beta}\bigr] \,dxdyd\tau\Bigr|\\
\leq\Bigl(\int_0^t\!\iint(u_{xx}^2+u_{yy}^2+u_x^2)\rho'_{2\alpha,2\beta}
\,dxdyd\tau\Bigr)^{1/2}
\Bigl(\int_0^t\!\iint f_1^2\frac{\rho^2_{2\alpha,2\beta}}{\rho'_{2\alpha,2\beta}}\,dxdyd\tau\Bigr)^{1/2},
\end{multline*}
then according to inequality \eqref{2.6} one can finish the proof by the passage to the limit.
\end{proof}

\section{Existence of weak solutions}\label{S3}

Consider an equation of more general than \eqref{1.1} type:
\begin{equation}\label{3.1}
u_t+u_{xxx}+u_{xyy}-\delta(u_{xx}+u_{yy})+(g(u))_x=f(t,x,y).
\end{equation}

\begin{definition}\label{D3.1}
A function $u\in L_\infty(0,T;L_2)$ is called a weak solution to problem \eqref{3.1}, \eqref{1.2}, \eqref{1.3} if the function $g(u(t,x,y))\in L_1((0,T)\times(-r,r)\times(0,L))$ for any $r>0$ and for any function $\varphi\in C^\infty(\overline{\Pi}_T)$ such that $\varphi|_{t=T}=0$, $\varphi(t,x,y)=0$ when $|x|\geq r$ for some $r>0$ and $\varphi|_{y=0}=\varphi|_{y=L}=0$ in the case a, $\varphi_y|_{y=0}=\varphi_y|_{y=L}=0$ in the case b, $\varphi|_{y=0}=\varphi_y|_{y=L}=0$ in the case c, $\varphi|_{y=0}=\varphi|_{y=L}, \varphi_y|_{y=0}=\varphi_y|_{y=L}$ in the case d, the following equality holds:
\begin{multline}\label{3.2}
\iiint_{\Pi_T}\bigl[u(\varphi_t+\varphi_{xxx}+\varphi_{xyy}+\delta\varphi_{xx}
+\delta\varphi_{yy}) 
+g(u)\varphi_{x}+f\varphi\bigr]\,dxdyd\tau \\
+\iint_\Sigma u_0\varphi|_{t=0}\,dxdy=0.
\end{multline}
\end{definition}

\begin{remark}\label{R3.1}
It is easy to see that if $g\equiv 0$ and a function $u$ is a weak solution to problem \eqref{3.1}, \eqref{1.2}, \eqref{1.3}, then it is a generalized solution to problem \eqref{2.1}, \eqref{1.2}, \eqref{1.3} in the sense of Definition~\ref{D2.1}, though the class of test functions $\varphi$ in that definition is broader, than the one in Definition~\ref{D3.1}.
\end{remark}

\begin{remark}\label{R3.2}
If a weak solution to problem \eqref{3.1}, \eqref{1.2}, \eqref{1.3} $u\in L_\infty(0,T; H^1)$ and the function $g$ has the rate of growth not higher than polynomial when $|u|\to\infty$ (for example, it satisfies inequality \eqref{1.5} for some $b>0$), then according to \eqref{1.7} $g(u(t,x,y))\in L_\infty(0,T;L_2)$ and thus equality \eqref{3.2} also holds for the test functions $\varphi$ from Definition~\ref{D2.1}.
\end{remark}

First of all, we prove a lemma on solubility of problem \eqref{3.1}, \eqref{1.2}, \eqref{1.3} for spaces $L_2^\alpha$ in the  "regularized" case.

\begin{lemma}\label{L3.1}
Let $\delta>0$, $g\in C^1(\mathbb R)$ and $|g'(u)|\leq c\ \forall u\in\mathbb R$. Assume that $u_0\in L_2^\alpha$ for some $\alpha\geq 0$, $f\in L_1(0,T;L_2^\alpha)$. Then each of problems \eqref{3.1}, \eqref{1.2}, \eqref{1.3} has a unique solution $u\in C([0,T];L_2^\alpha)\cap L_2(0,T;H^{1,\alpha})$.
\end{lemma}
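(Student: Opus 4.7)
The plan is to set up a contraction mapping argument on a short time interval using the linear theory of Lemma~\ref{L2.3}, and then extend the local solution to all of $[0,T]$ via an a priori energy estimate. Without loss of generality we may assume $g(0)=0$, since adding a constant to $g$ leaves $(g(u))_x$ unchanged; then $|g(u)|\leq c|u|$ and $|g(u)-g(v)|\leq c|u-v|$ for all $u,v\in\mathbb R$.

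For a parameter $T'\in(0,T]$ to be fixed and $v\in C([0,T'];L_2^\alpha)$, define $\Lambda v\equiv u$ as the generalized solution, furnished by Lemma~\ref{L2.3}, of the linear problem with data \eqref{1.2}, \eqref{1.3} and right-hand side presented as $f_0+\delta^{1/2}f_{1x}$ with $f_0=f$ and $f_1=-\delta^{-1/2}g(v)$. The bound $|g(v)|\leq c|v|$ gives $\|f_1\|_{L_2(0,T';L_2^\alpha)}\leq c\delta^{-1/2}(T')^{1/2}\|v\|_{C([0,T'];L_2^\alpha)}$, so $\Lambda$ is well defined and by Lemma~\ref{L2.3} lands in $\widetilde X^\alpha(\Pi_{T'})$, with $\delta|D\Lambda v|\in L_2(0,T';L_2^\alpha)$ and hence $\Lambda v\in C([0,T'];L_2^\alpha)\cap L_2(0,T';H^{1,\alpha})$. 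Applying the same lemma to $\Lambda v_1-\Lambda v_2$, which solves the linear equation with zero initial data and forcing term $\bigl(g(v_2)-g(v_1)\bigr)_x$, yields
\begin{equation*}
\|\Lambda v_1-\Lambda v_2\|_{C([0,T'];L_2^\alpha)}\leq c(T,\delta)(T')^{1/2}\|v_1-v_2\|_{C([0,T'];L_2^\alpha)}.
\end{equation*}
Choosing $T'$ small enough makes $\Lambda$ a strict contraction, and the fixed point is a weak solution on $[0,T']$.

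To extend the solution globally, I would reproduce the energy identity that underlies inequality \eqref{2.4}, but now directly for the nonlinear equation. Multiplying \eqref{3.1} by $2u\rho$ with $\rho=\rho_{2\alpha,1}$ and integrating, the additional nonlinear term equals
\begin{equation*}
2\iint(g(u))_x u\rho\,dxdy=2\iint g^*(u)\rho'\,dxdy-2\iint g(u)u\rho'\,dxdy,
\end{equation*}
which is bounded by $c\iint u^2\rho\,dxdy$ thanks to $|g(u)|\leq c|u|$, $|g^*(u)|\leq cu^2$, and the admissibility bound $\rho'\leq c\rho$. Gronwall's lemma then gives an a priori bound on $\|u(t,\cdot)\|_{L_2^\alpha}$ on any finite interval, so that the contraction argument can be iterated on successive intervals of fixed length up to $T$. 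The parabolic term also yields $u\in L_2(0,T;H^{1,\alpha})$.

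Uniqueness on $[0,T]$ follows by subtracting two solutions $u_1,u_2$, applying the same energy estimate to $w=u_1-u_2$, and using $|g(u_1)-g(u_2)|\leq c|w|$ together with $w|_{t=0}=0$. The only real technical point is the first step: rewriting the nonlinearity as $\delta^{1/2}f_{1x}$ so that Lemma~\ref{L2.3} supplies the Lipschitz estimate for $\Lambda$ in $C([0,T'];L_2^\alpha)$; once this has been arranged, the global extension and uniqueness are routine Gronwall arguments. The factor $\delta^{-1/2}$ in the contraction constant explains why this scheme is restricted to the regularized problem.
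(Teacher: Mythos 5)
Your argument is correct and follows essentially the same route as the paper: both set up the contraction map $\Lambda v$ via Lemma~\ref{L2.3} with $f_1\equiv-\delta^{-1/2}g(v)$ and use the global Lipschitz bound $|g(v)-g(\widetilde v)|\leq c|v-\widetilde v|$ to extract the factor $(T')^{1/2}$. The only (harmless) difference is that you extend globally via an a priori Gronwall bound, whereas in the paper the contraction constant $c(T,\delta)t_0^{1/2}$ is independent of the data, so the local step can simply be iterated with a fixed step size.
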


\begin{proof}
We apply the contraction principle. For $t_0\in(0,T]$ define a mapping $\Lambda$ on a set $Y^\alpha(\Pi_{t_0})=C([0,t_0];L_2^\alpha)\cap L_2(0,t_0;H^{1,\alpha})$ as follows: $u=\Lambda v\in Y^\alpha(\Pi_{t_0})$ is a solution to a linear problem
\begin{equation}\label{3.3}
u_t+u_{xxx}+u_{xyy}-\delta u_{xx}-\delta u_{yy}=f-(g(v))_x\
\end{equation}
in $\Pi_{t_0}$ with boundary conditions \eqref{1.2}, \eqref{1.3}.

Note that $|g(v)|\leq c|v|$ and, therefore,
$$
\|g(v)\|_{L_2(0,t_0;L_2^\alpha)}\leq c||v||_{L_2(0,t_0;L_2^\alpha)}<\infty.
$$

Thus, according to Lemma~\ref{L2.3} (where $f_1\equiv\delta^{-1/2}g(v)$) the mapping $\Lambda$ exists. Moreover, for functions $v,\widetilde{v}\in Y^\alpha(\Pi_{t_0})$
$$
\|g(v)-g(\widetilde{v})\|_{L_2(0,t_0;L_2^\alpha)}\leq c\|v-\widetilde{v}\|_{L_2(0,t_0;L_2^\alpha)}\leq ct_0^{1/2}\|v-\tilde{v}\|_{C([0,t_0];L_2^\alpha)}.
$$
As a result, according to inequality \eqref{2.3}
$$
\|\Lambda v-\Lambda\widetilde{v}\|_{Y^\alpha(\Pi_{t_0})}\leq 
c(T,\delta)t_0^{1/2}\|v-\widetilde{v}\|_{Y^\alpha(\Pi_{t_0})}.
$$
\end{proof}

Now we pass to the proof of Theorem~\ref{T1.1}.

\begin{proof}[Proof of Theorem~\ref{T1.1}] 
For $h\in (0,1]$ consider a set of initial-boundary value problems in $\Pi_T$
\begin{equation}\label{3.4}
u_t+u_{xxx}+u_{xyy}-hu_{xx}-hu_{yy}+(g_h(u))_x=f(t,x,y)
\end{equation}
with boundary conditions \eqref{1.2}, \eqref{1.3}, where
\begin{equation}\label{3.5}
g_h(u)\equiv\int_0^u\Bigl[g'(\theta)\eta(2-h|\theta|)+g'\bigl(\frac{2\sgn\theta}{h}\bigr)\eta(h|\theta|-1)\Bigr]\,d\theta.
\end{equation}
Note that $g_h(u)\equiv g(u)$ if $|u|\leq 1/h$ and $|g'_h(u)|\leq c(h^{-1})\ \forall u\in\mathbb R$. 

According to Lemma~\ref{L3.1} there exists a unique solution to this problem $u_h\in C([0,T];L_2^\alpha)\cap L_2(0,T;H^{1,\alpha})$. Moreover, $|g'_h(u)|\leq c(1+|u|^b)$ uniformly with respect to $h$.

Next, establish appropriate estimates for functions $u_h$ uniform with respect to $h$.

Write down corresponding inequality \eqref{2.4} for functions $u_h$ (we omit the index $h$ in intermediate steps for simplicity):
\begin{multline}\label{3.6}
\iint u^2\rho \,dxdy
+\int_0^t\!\iint |Du|^2\cdot(\rho'+h\rho)\,dxdyd\tau \leq \iint u_0^2\rho \,dxdy \\
+c\int_0^t\!\iint u^2\rho \,dxdyd\tau+2\int_0^t\!\iint fu\rho \,dxdyd\tau 
-2\int_0^t\!\iint g'(u)u_x u\rho \,dxdyd\tau.
\end{multline}

Firstly let $\rho\equiv 1$, then since
\begin{equation}\label{3.7}
g'(u)u_xu=\Bigl(\int_0^u g'(\theta)\theta \,d\theta\Bigr)_x \equiv \bigl(g'(u)u\bigr)^*_x
\end{equation}
we have that $\iint g'(u)u_x u\,dxdy=0$ and inequality \eqref{3.6} yields that 
\begin{equation}\label{3.8}
\|u_h\|_{C([0,T];L_2)}+h^{1/2}\|u_h\|_{L_2(0,T;H^1)}\leq c
\end{equation}
uniformly with respect to $h$.

Now let $\rho(x)$ be an admissible weight function such that $\rho'(x)$ is also an admissible weight function, and $\rho'(x)\leq c\rho(x)$. 
Then according to \eqref{3.7}
$$
\Bigl|\iint g'(u)u_xu\rho \,dxdy\Bigr|
=\Bigl|\iint(g'(u)u)^*\rho'\,dxdy\Bigr|\leq c\iint(u^2+|u|^{b+2})\rho'\,dxdy.
$$
Apply interpolating inequality \eqref{1.7} for $k=1$, $m=0$, $\rho_1=\rho_2\equiv\rho'$:
\begin{multline}\label{3.9}
\iint |u|^{b+2}\rho'\,dxdy \leq \Bigl(\iint u^2\,dxdy\Bigr)^{b/2} \Bigl(\iint|u|^{4/(2-b)}(\rho')^{2/(2-b)}\,dxdy\Bigr)^{1-b/2} \\
\leq c\Bigl(\iint u^2\,dxdy\Bigr)^{b/2}\Bigl[\Bigl(\iint |Du|^2\rho'dxdy\Bigr)^{b/2} \Bigl(\iint u^2\rho \,dxdy\Bigr)^{1-b/2}\\
+\iint u^2\rho \,dxdy\Bigr].
\end{multline}
Since $b<2$ and the norm of the solution in the space $L_2$ is already  estimated in \eqref{3.8}, it follows from \eqref{3.6} that
\begin{multline*}
\iint u^2\rho \,dxdy+\frac{1}{2}\int_0^t\!\iint |Du|^2\rho' \,dxdy+h\int_0^t\!\iint|Du|^2\rho dxdyd\tau \\
\leq\iint u_0^2\rho \,dxdy+c\int_0^t\! \iint u^2\rho \,dxdyd\tau+2\int_0^t\!\iint fu\rho dxdyd\tau,
\end{multline*}
and choosing the functions $\rho$ in the same way as in the proof of Lemma~\ref{L2.3} we obtain that uniformly with respect to $h$
\begin{equation}\label{3.10}
\|u_h\|_{X^\alpha(\Pi_T)}+h^{1/2}\|u_h\|_{L_2(0,T;H^{1,\alpha})}\leq c.
\end{equation}

In particular, $\|u_h\|_{L_2(0,T;H^1(Q_n))}\leq c(n)$ for any rectangle $Q_n=(-n,n)\times (0,L)$, and thus
\begin{equation}\label{3.11}
\|u_h^2\|_{L_2((0,T)\times Q_n)}\leq c(n).
\end{equation}
Since $|g_h(u)|\leq c(|u|+|u|^3)$ we have that $\|g_h(u_h)\|_{L_2(0,T;L_1(Q_n))}\leq c(n)$. Using the well-known embedding $L_1(\Omega)\subset H^{-2}(Q_n)$ for domains $\Omega\subset \mathbb R^2$ we first derive that $\|g_h(u_h)\|_{L_2(0,T;H^{-2}(Q_n))}\leq c(n)$, and then according to equation \eqref{3.1} itself that uniformly with respect to $h$
$$
\|u_{ht}\|_{L_1(0,T;H^{-3}(Q_n))}\leq c(n).
$$
Applying the compactness embedding theorem of evolutionary spaces from \cite{S2} we obtain that the set $\{u_h\}$ is precompact in $L_2((0,T)\times Q_n)$ for all $n$.

Now show that if $u_h\to u$ in $L_2((0,T)\times Q_n)$ for some sequence $h\to 0$, then $g_h(u_h)\to g(u)$ in $L_1((0,T)\times Q_n)$.
Indeed,
\begin{multline*}
|g_h(u_h)-g(u)|\leq |g_h(u_h)-g_h(u)|+|g_h(u)-g(u)| \\
\leq c(1+u_h^2+u^2)|u_h-u|+|g_h(u)-g(u)|
\end{multline*}
and then we can use estimate \eqref{3.11}.

As a result, the required solution is constructed in a standard way as the limit of the solutions $u_h$ when $h\to 0$.
\end{proof}

We now proceed to solutions in spaces $H^{1,\alpha}$ and firstly estimate a lemma analogous to Lemma~\ref{L3.1}.

\begin{lemma}\label{L3.2}
Let $\delta>0$, $g\in C^2(\mathbb R)$ and $|g'(u)|,|g''(u)|\leq c \ \forall u\in\mathbb R$. Assume that $u_0\in H^{1,\alpha}$ for some $\alpha\geq 0$, $f\in L_1(0,T;H^{1,\alpha})$ and the same assumptions as in Theorem~\ref{T1.2} hold for the functions $u_0$, $f$ when $y=0$, $y=L$. Then each of problems \eqref{3.1}, \eqref{1.2}, \eqref{1.3} has a unique solution $u\in C([0,T];H^{1,\alpha})\cap L_2(0,T;H^{2,\alpha})$.
\end{lemma}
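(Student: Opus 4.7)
The plan is to adapt the contraction mapping scheme used in the proof of Lemma~\ref{L3.1}, with Lemma~\ref{L2.5} replacing Lemma~\ref{L2.3}. For $t_0\in(0,T]$ set $Y^{1,\alpha}(\Pi_{t_0})=C([0,t_0];H^{1,\alpha})\cap L_2(0,t_0;H^{2,\alpha})$ and define $\Lambda:v\mapsto u$, where $u$ is the unique solution in $\widetilde{X}^{1,\alpha}(\Pi_{t_0})$ to the linear problem
\begin{equation*}
u_t+u_{xxx}+u_{xyy}-\delta u_{xx}-\delta u_{yy}=f-(g(v))_x
\end{equation*}
supplemented with \eqref{1.2}, \eqref{1.3}. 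To invoke Lemma~\ref{L2.5} I decompose the right-hand side as $f_0+\delta^{1/2}f_1$ with $f_0\equiv f$ (whose boundary compatibility at $y=0,L$ is given by hypothesis) and $f_1\equiv-\delta^{-1/2}(g(v))_x$ (no compatibility is required of $f_1$). Since $|g'|\leq c$,
$$\|f_1\|_{L_2(0,t_0;L_2^\alpha)}\leq c\delta^{-1/2}\|v_x\|_{L_2(0,t_0;L_2^\alpha)}\leq c\delta^{-1/2}t_0^{1/2}\|v\|_{Y^{1,\alpha}(\Pi_{t_0})},$$
so $\Lambda$ maps $Y^{1,\alpha}(\Pi_{t_0})$ into itself. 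Taking the radius $M$ equal to twice the linear bound in \eqref{2.7} and $t_0$ small ensures that $\Lambda$ sends the closed ball $B_M\subset Y^{1,\alpha}(\Pi_{t_0})$ into itself.

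For the Lipschitz estimate $\Lambda v-\Lambda\tilde v$ solves the linear equation with forcing $-(g(v)-g(\tilde v))_x$, which I split as
\begin{equation*}
(g(v)-g(\tilde v))_x=g'(v)(v-\tilde v)_x+\bigl(g'(v)-g'(\tilde v)\bigr)\tilde v_x.
\end{equation*}
The first summand is controlled pointwise by $c|(v-\tilde v)_x|$ using $|g'|\leq c$, contributing $ct_0^{1/2}\|v-\tilde v\|_{Y^{1,\alpha}(\Pi_{t_0})}$ to the $L_2(0,t_0;L_2^\alpha)$ norm. The second summand, bounded pointwise by $c|v-\tilde v|\cdot|\tilde v_x|$ via $|g''|\leq c$, is the main obstacle: neither factor lies in $L_\infty$ (the two-dimensional Sobolev embedding does not give $H^1\hookrightarrow L_\infty$), and a naive bound produces no smallness in $t_0$. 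I handle it by a H\"older split in space (placing the weight $(1+x_+)^\alpha$ entirely on $v-\tilde v$) combined with two applications of \eqref{1.7}: with $k=1$, $m=0$, $q=4$ and weights equivalent to $(1+x_+)^{2\alpha}$ to obtain $\|(v-\tilde v)(1+x_+)^\alpha\|_{L_4}\leq c\|v-\tilde v\|_{H^{1,\alpha}}$, and with $k=2$, $m=1$, $q=4$, $\rho_1=\rho_2=1$ to obtain $\|\tilde v_x\|_{L_4}\leq c\|\tilde v\|_{H^2}^{3/4}\|\tilde v\|_{L_2}^{1/4}+c\|\tilde v\|_{L_2}$. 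A further H\"older in time then yields
$$\|\tilde v_x\|_{L_2(0,t_0;L_4)}\leq c\|\tilde v\|_{L_2(0,t_0;H^2)}^{3/4}\|\tilde v\|_{L_2(0,t_0;L_2)}^{1/4}+c\|\tilde v\|_{L_2(0,t_0;L_2)}\leq cMt_0^{1/8},$$
the smallness arising from $\|\tilde v\|_{L_2(0,t_0;L_2)}\leq t_0^{1/2}\|\tilde v\|_{C([0,t_0];L_2)}$. Combining,
$$\|(v-\tilde v)\tilde v_x\|_{L_2(0,t_0;L_2^\alpha)}\leq cMt_0^{1/8}\|v-\tilde v\|_{Y^{1,\alpha}(\Pi_{t_0})},$$
and inequality \eqref{2.7} then gives
$$\|\Lambda v-\Lambda\tilde v\|_{Y^{1,\alpha}(\Pi_{t_0})}\leq c(T,\delta,M)\bigl(t_0^{1/2}+t_0^{1/8}\bigr)\|v-\tilde v\|_{Y^{1,\alpha}(\Pi_{t_0})},$$
a strict contraction for $t_0$ sufficiently small.

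The unique fixed point on $B_M$ is the solution on $[0,t_0]$. Because the linear a priori bound \eqref{2.7} controls $\|u(t)\|_{H^{1,\alpha}}$ on all of $[0,T]$ uniformly in terms of the data, the step size $t_0$ can be chosen uniformly and the procedure iterated finitely many times to reach $T$. Uniqueness in $C([0,T];H^{1,\alpha})\cap L_2(0,T;H^{2,\alpha})$ follows from the same small-interval contraction argument applied to the difference of two solutions, which by continuity of $t\mapsto\|u(t)\|_{H^{1,\alpha}}$ must lie in a common ball on a sufficiently short initial interval.
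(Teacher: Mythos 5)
Your overall scheme is the paper's: a contraction on $Y^{1,\alpha}(\Pi_{t_0})=C([0,t_0];H^{1,\alpha})\cap L_2(0,t_0;H^{2,\alpha})$ built on Lemma~\ref{L2.5} with $f_0\equiv f$ and $f_1\equiv -\delta^{-1/2}g'(v)v_x$, followed by continuation in time. The only genuinely different ingredient is your treatment of the difference term $(g'(v)-g'(\tilde v))\tilde v_x$: you use an $L_4\times L_4$ H\"older split, estimating the weighted factor $\|(v-\tilde v)(1+x_+)^\alpha\|_{L_4}$ by \eqref{1.7} with $k=1$, $q=4$ and $\|\tilde v_x\|_{L_4}$ by \eqref{1.7} with $k=2$, $m=1$, $q=4$, which gains $t_0^{1/8}$; the paper instead bounds $\sup_{(x,y)}|v-\tilde v|$ by \eqref{1.7} with $k=2$, $m=0$, $q=+\infty$ and keeps $v_x$ in $L_2^\alpha$, gaining $t_0^{1/4}$. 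Both are valid and both produce a Lipschitz constant depending on the ball radius, so in either version the contraction is only local on a ball.

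The step you should not wave at is the continuation. As written, the claim that ``the linear a priori bound \eqref{2.7} controls $\|u(t)\|_{H^{1,\alpha}}$ on all of $[0,T]$ uniformly in terms of the data'' is not correct: applied to the fixed point, \eqref{2.7} carries $f_1=-\delta^{-1/2}g'(u)u_x$ on its right-hand side and therefore only yields the self-referential bound $\|u\|_{Y^{1,\alpha}(\Pi_{t_0})}\leq c\bigl[\|u_0\|_{H^{1,\alpha}}+\|f\|_{L_1(0,t_0;H^{1,\alpha})}\bigr]+c\,t_0^{1/2}\|u\|_{Y^{1,\alpha}(\Pi_{t_0})}$. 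Because $|g'|\leq c$ this is repairable --- absorb the last term for $c\,t_0^{1/2}\leq 1/2$ and iterate over $O(T/t_0)$ subintervals, accepting a geometrically growing constant --- but this is precisely what guarantees that the radius $M$ stays bounded and the step size stays uniform along the iteration, so it must be carried out rather than asserted. The paper does this differently: it proves the a priori estimate \eqref{3.14} directly from the energy identities \eqref{2.4} (with $f_0\equiv f-g'(u)u_x$, $f_1\equiv 0$) and \eqref{2.8} (with $f_0\equiv f$, $f_1\equiv-\delta^{-1/2}g'(u)u_x$), absorbing the nonlinear contribution via Cauchy--Schwarz into the $\delta$-dissipation and applying Gronwall. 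Either route closes the argument; one of them has to appear explicitly.
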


\begin{proof}
Introduce for $t_0\in (0,T]$ a space $Y^{1,\alpha}(\Pi_{t_0})=C([0,t_0];H^{1,\alpha})\cap L_2(0,t_0;H^{2,\alpha})$ and define a mapping $\Lambda$ on it in the same way as in the proof of Lemma~\ref{L3.1} (with the substitution of $Y^\alpha$ by $Y^{1,\alpha}$). Since $|g'(v)v_x|\leq c|v_x|$ then
$$
||g'(v)v_x||_{L_2(0,t_0;L_2^\alpha)}\leq ct_0^{1/2}||v||_{C([0,t_0];H^{1,\alpha})}
$$
and according to Lemma~\ref{L2.5} (where $f_1\equiv\delta^{-1/2}g'(v)v_x$) such a mapping $\Lambda$ exists. Moreover, according to \eqref{2.7}
\begin{equation}\label{3.12}
\|\Lambda v\|_{Y^{1,\alpha}(\Pi_{t_0})}\leq c\left(\|u_0\|_{H^{1,\alpha}}+ 
t_0^{1/2}\|v\|_{Y^{1,\alpha}(\Pi_{t_0})}+1\right).
\end{equation}
Besides that, since $|g'(v)v_x-g'(\widetilde{v})\widetilde{v}_x|\leq c|v_x|\cdot |v-\widetilde{v}|+c|v_x-\tilde{v}_x|$ 
\begin{multline}\label{3.13}
\|\Lambda v-\Lambda\widetilde{v}\|_{Y^{1,\alpha}(\Pi_{t_0})}\leq c||g'(v)v_x-g'(\widetilde{v})\widetilde{v}_x||_{L_2(0,t_0;L_2^\alpha)} \\ 
\leq c\Bigl[\sup_{t\in [0,t_0]}||v_x||_{L_2^\alpha}\bigl\|\sup_{(x,y)\in\Sigma}
|v-\widetilde{v}|\bigr\|_{L_2(0,t_0)}+t_0^{1/2}\|v_x-\widetilde{v}_x\|_{C([0,t_0];L_2^\alpha)}\Bigr] \\
\leq c_1\Bigl[\sup_{t\in[0,t_0]}\|v\|_{H^{1,\alpha}}\cdot 
\sup_{t\in[0,t_0]}\|v-\widetilde{v}\|_{L_2}^{1/2}\cdot t_0^{1/4}\cdot
\|v-\widetilde{v}\|_{L_2(0,t_0;H^2)}^{1/2} \\
+t_0^{1/2}\|v-\widetilde{v}\|_{Y^{1,\alpha}(\Pi_{t_0})}\Bigr] \leq c_2 t_0^{1/4} (1+\sup_{t\in [0,t_0]}\|v\|_{H^{1,\alpha}})\|v-\widetilde{v}\|_{Y^{1,\alpha}(\Pi_{t_0})},
\end{multline}
where we used inequality \eqref{1.7} for $q=+\infty$.

Now we estimate the following a priori estimate: if $u\in Y^{1,\alpha}(\Pi_{T'})$ is a solution to the considered problem for some $T'\in (0,T]$ then
\begin{equation}\label{3.14}
\|u\|_{C([0,T'];H^{1,\alpha})}\leq c(\|u_0\|_{H^{1,\alpha}}, 
\|f\|_{L_1(0,T';H^{1,\alpha})}).
\end{equation} 

In fact, using inequalities \eqref{2.4}, where $f_0\equiv f-g'(u)u_x$, $f_1\equiv 0$ and \eqref{2.8} where $f_0\equiv f$, $f_1\equiv -\delta^{-1/2}g'(u)u_x$ for the function $\rho(x)\equiv 1+\rho_{2\alpha,1}(x)$ we obtain that
\begin{multline*}
\iint(u^2+|Du|^2)\rho \,dxdy+\delta\int_0^t\iint|D^2u|^2\rho \,dxdyd\tau 
\leq\iint(u_0^2+|Du_0|^2)\rho \,dxdy \\
+c\int_0^t\iint(u^2+|Du|^2)\rho \,dxdyd\tau +2\int_0^t\iint(fu+f_xu_x+f_yu_y)\rho \,dxdyd\tau \\
+c\int_0^t\Bigl(\iint(u^2_{xx}+u^2_{yy})\rho \,dxdy\Bigr)^{1/2} \Bigl(\iint u_x^2\rho \,dxdy\Bigr)^{1/2}d\tau,
\end{multline*}
which implies estimate \eqref{3.14}.

Inequalities \eqref{3.12} and \eqref{3.13} allow us to construct a solution to the considered problem locally in time by the contraction while estimate \eqref{3.14} enables us to extend it for the whole time segment $[0,T]$.
\end{proof}

At the end of this section we present the proof of the part of Theorem~\ref{T1.2} concerning existence of solutions.

\begin{proof}[Proof of Theorem~\ref{T1.2}, existence] 
As in the proof of Theorem~\ref{T1.1} consider the set of "regularized" problems \eqref{3.1}, \eqref{1.2}, \eqref{1.3}. Note that according to formula \eqref{3.5} $g'_h(u)=g'(\sgn u/h)$ when $|u|\geq 2/h$ and thus $|g''_h(u)|\leq c(h^{-1})\ \forall u\in\mathbb R$. 

Consider solutions to these problems $u_h\in C([0,T]; H^{1,\alpha})\cap L_2(0,T;H^{2,\alpha})$ and establish for them estimates uniform with respect to $h$ (note that estimate \eqref{3.10} remains valid). Write down corresponding inequalities \eqref{2.8}, \eqref{2.10} for these functions (where $f_0\equiv f, f_1\equiv -h^{-1/2}g'(u)u_x)$ and summarize them (index $h$ is again omitted):
\begin{multline}\label{3.15}
\iint(|Du|^2-2g^*(u))\rho \,dxdy+\int_0^t\!\iint|D^2u|^2\cdot(\rho'+h\rho) \,dxdyd\tau \\
\leq \iint(|Du_0|^2-2g^*(u_0))\rho \,dxdy+c\int_0^t\!\iint|Du|^2\rho \,dxdyd\tau \\
+2\int_0^t\!\iint (f_xu_x+f_yu_y)\rho \,dxdyd\tau 
-2\int_0^t\!\iint fg(u)\rho \,dxdyd\tau \\ 
+2\int_0^t\!\iint g'(u)u^2_x\rho' \,dxdyd\tau
-2\int_0^t\!\iint g(u)(u_{xx} +u_{yy})\rho' \,dxdyd\tau \\
+2h\int_0^t\!\iint g'(u)(u_x^2+u_y^2)\rho \,dxdyd\tau 
+2h\int_0^t\!\iint g(u)u_x\rho' \,dxdyd\tau \\ 
+2\int_0^t\!\iint g'(u)g(u)u_x\rho \,dxdyd\tau.
\end{multline}
Note that $|g^*(u)|\leq c(u^2+|u|^{b+2})$ and so $\iint g^*(u)\rho \,dxdy$ can be estimated similarly to \eqref{3.9} (with the replacement of $\rho'$ by $\rho$). 

Next, $|fg(u)|\leq c|f|(|u|+|u|^3)$ and so 
\begin{multline*}
\iint|fu|^3\rho \,dxdy\leq c\Bigl(\iint f^4\rho^2 \,dxdy\Bigr)^{1/4} 
\Bigl(\iint u^4\rho^2dxdy\Bigr)^{3/4} \\
\leq c_1\Bigl(\iint(|Df|^2+f^2)\rho \,dxdy\Bigr)^{1/2} 
\Bigl[\Bigl(\iint |Du|^2\rho dxdy\Bigr)^{3/4}\Bigl(\iint u^2\rho dxdy)^{3/4} \\
+\Bigl(\iint u^2\rho dxdy\Bigr)^{3/2}\Bigr],
\end{multline*}
then using inequality \eqref{1.7} we find that
\begin{multline*}
h\iint |u|^b|Du|^2\rho \,dxdy \leq h\Bigl(\iint |u|^{2b}\rho^b \,dxdy\Bigr)^{1/2}
\Bigl(\iint |Du|^4\rho^2 \,dxdy\Bigr)^{1/2} \\
\leq ch \Bigl(\iint |D^2u|^2\rho \,dxdy\Bigr)^{(b+2)/4} 
\Bigl(\iint u^2\rho dxdy\Bigr)^{(b+2)/4}+ch \Bigl(\iint u^2\rho dxdy\Bigr)^{(b+2)/2}.
\end{multline*}
Thus, setting firstly in \eqref{3.15} $\rho\equiv 1$ we find that
\begin{equation}\label{3.16}
\|u_h\|_{C([0,T];H^1)}+h^{1/2}\bigl\||D^2u|\bigr\|_{L_2(\Pi_T)}\leq c.
\end{equation}
Now let $\rho(x)$ be an admissible weight function such that its derivative $\rho'$ is also an admissible weight function and $\rho'(x)\leq c\rho(x)$. Estimate the terms in the right part of \eqref{3.15}:
\begin{multline*}
\iint u^2u^2_x\rho' \,dxdy \leq \Bigl(\iint u^4 \,dxdy\Bigr)^{1/2}
\Bigl(\iint u^4_x(\rho')^2\, dxdy\Bigr)^{1/2} \\
\leq c\sup_{t\in [0,T]}\|u\|^2_{H^1} 
\Bigl[\Bigl(\iint |Du_x|^2\rho' \,dxdy\Bigr)^{1/2} 
\Bigl(\iint u^2_x\rho \,dxdy\Bigr)^{1/2}+\iint u^2_x\rho \,dxdy\Bigr],
\end{multline*}
\begin{multline*}
\iint |u|^3(|u_{xx}|+|u_{yy}|)\rho' \,dxdy \\ 
\leq c\Bigl(\iint u^8\,dxdy\Bigr)^{1/4} \Bigl(\iint u^4\rho^2 \,dxdy\Bigr)^{1/4}
\Bigl(\iint |D^2u|^2\rho'dxdy\Bigr)^{1/2} \\ 
\leq c_1\sup_{t\in[0,T]}\|u\|^2_{H^1} 
\Bigl(\iint(|Du|^2+u^2)\rho \,dxdy\Bigr)^{1/2}
\Bigl(\iint |D^2u|^2\rho'dxdy\Bigr)^{1/2},
\end{multline*}
$\iint |u|^3|u_x|\rho'\, dxdy$ is estimated similarly and, finally, 
$$
\Bigl|\iint g'(u)g(u)u_x\rho \,dxdy\Bigr| =\Bigl|\iint(g'(u)g(u))^*\rho'\,dxdy\Bigr| \leq c\iint(u^2+u^6)\rho \,dxdy,
$$
where the last integral also similarly can be estimated by virtue of \eqref{1.7}.

Thus, taking into account previously established estimates \eqref{3.10}, \eqref{3.16} and choosing the function $\rho$ in the same way as in the proof of Theorem~\ref{T1.1}, we find that
\begin{equation}\label{3.17}
\|u_h\|_{X^{1,\alpha}(\Pi_T)}+h^{1/2}\|u_h\|_{L_2(0,T;H^{2,\alpha})}\leq c.
\end{equation}

The end of the proof is exactly the same as for Theorem~\ref{T1.1}.
\end{proof}

\section{Continuous dependence of weak solutions}\label{S4}

First of all, we present a theorem from which the result of Theorem~\ref{T1.2} on uniqueness of weak solutions follows.

\begin{theorem}\label{T4.1}
Let $g\in C^2(\mathbb R)$ and inequality \eqref{1.6} is valid for certain $b\geq 1$. Let $u_0,\widetilde{u}_0\in H^{1,\alpha}$, $f,\widetilde{f}\in L_1(0,T;H^{1,\alpha})$ for some $\alpha\geq 1/2$, $u,\widetilde{u}$ be weak solutions to corresponding problems \eqref{1.1}--\eqref{1.3} from the class $X^{1,\alpha}(\Pi_T)$. Then for any $\beta>0$
\begin{multline}\label{4.1}
\|(u-\widetilde{u})\rho_{\alpha,\beta}(x)||_{L_\infty(0,T;L_2)}+\bigl\| |D(u-\widetilde{u})|\rho_{\alpha-1/2,\beta}(x)\bigr\|_{L_2(\Pi_T)} \\
\leq c\left(\|(u_0-\widetilde{u}_0)\rho_{\alpha,\beta}(x)\|_{L_2} +
\|(f-\widetilde{f})\rho_{\alpha,\beta}(x)\|_{L_1(0,T;L_2)}\right),
\end{multline}
where the constant $c$ depends on the norms of the functions $u,\widetilde{u}$ in the space $L_\infty(0,T;H^{1,1/2})$.
\end{theorem}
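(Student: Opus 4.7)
The plan is to subtract the two weak formulations to obtain the equation for $w := u - \widetilde u$,
\begin{equation*}
w_t + w_{xxx} + w_{xyy} + (g(u) - g(\widetilde u))_x = F, \qquad F := f - \widetilde f,
\end{equation*}
with initial datum $w_0 := u_0 - \widetilde u_0$ and the same boundary conditions as in \eqref{1.3}, and then to carry out a weighted energy estimate by multiplying this equation by $2w\rho$ with $\rho := \rho_{2\alpha,2\beta}(x) = \rho_{\alpha,\beta}(x)^2$ and integrating over $\Pi_t$. Following the procedure of Lemma~\ref{L2.3}, the linear terms yield the quantities $\iint w^2(t)\rho\,dxdy$ and $\int_0^t\iint(3w_x^2+w_y^2)\rho'\,dxdyd\tau$ on the left, together with contributions from $\rho'''$ absorbable by Gronwall and a forcing contribution $2\int_0^t\iint Fw\rho$ estimated by Cauchy--Schwarz in terms of $\|F\rho_{\alpha,\beta}\|_{L_1(0,T;L_2)}$.

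For the nonlinear contribution, write $g(u) - g(\widetilde u) = Gw$ with $G := \int_0^1 g'(\widetilde u + sw)\,ds$, so that $|G|\leq c(1+|u|^b+|\widetilde u|^b)$ and $|G_x|\leq c(1+|u|^{b-1}+|\widetilde u|^{b-1})(|u_x|+|\widetilde u_x|)$ by \eqref{1.6}. Using $2Gww_x = G(w^2)_x$ and integrating by parts reduces the nonlinear contribution to
\begin{equation*}
-\int_0^t\iint G_x w^2\rho\,dxdyd\tau + \int_0^t\iint Gw^2\rho'\,dxdyd\tau.
\end{equation*}
The integral $\iint Gw^2\rho'$ is controlled by Cauchy--Schwarz and by \eqref{1.7} applied with $k=1$, $m=0$, $q=4$, $\rho_1=\rho_2=\rho'$, which bounds $\iint w^4(\rho')^2$ in terms of $\bigl(\iint|Dw|^2\rho'\bigr)\bigl(\iint w^2\rho\bigr)$ up to lower-order pieces; the factor $1+|u|^b+|\widetilde u|^b$ is absorbed via the two-dimensional Sobolev embedding $\|u\|_{L_{2b}}\leq c\|u\|_{H^1}\leq c\|u\|_{H^{1,1/2}}$. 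For the more delicate integral $\iint G_x w^2\rho$, whose representative piece is $\iint|u_x|w^2\rho$, the hypothesis $\alpha\geq 1/2$ allows the splitting $\rho = \rho_{1/2,\beta}\cdot\rho_{2\alpha-1/2,\beta}$ into admissible weight factors, and Cauchy--Schwarz gives
\begin{equation*}
\iint|u_x|w^2\rho \leq \|u_x\rho_{1/2,\beta}\|_{L_2}\cdot\|w\rho_{2\alpha-1/2,\beta}^{1/2}\|_{L_4}^2,
\end{equation*}
the first factor being bounded by $\|u\|_{H^{1,1/2}}$ and the $L_4$ norm controlled by another application of \eqref{1.7} with weights chosen so that the resulting energy quantities are $\|w\rho_{\alpha,\beta}\|_{L_2}$ and $\||Dw|\rho_{\alpha-1/2,\beta}\|_{L_2}$. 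The factor $|u|^{b-1}$ arising in the full $G_x$ is absorbed by one further H\"older step with $\|u\|_{L_p}\leq c\|u\|_{H^{1,1/2}}$ for any $p<\infty$; the terms involving $\widetilde u_x$ and $|\widetilde u|^{b-1}$ are symmetric.

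Time integration and Young's inequality then convert every nonlinear estimate into the form $\varepsilon\int_0^t\iint|Dw|^2\rho'\,d\tau + C\int_0^t\|w\rho_{\alpha,\beta}\|_{L_2}^2\,d\tau$ with $C$ depending only on the $L_\infty(0,T;H^{1,1/2})$-norms of $u$ and $\widetilde u$; taking $\varepsilon$ sufficiently small absorbs the dissipative part into the left-hand side, and Gronwall's inequality yields \eqref{4.1}. The main obstacle is the weight bookkeeping in the $\iint G_x w^2\rho$ estimate: the factor $u_x$ (respectively $\widetilde u_x$) must be peeled off against exactly the weight $\rho_{1/2,\beta}$ dictated by $H^{1,1/2}$ regularity, while the residual $L_4$-norm of $w$ has to be reconstituted via the interpolation inequality in a form strictly compatible with the two quantities already on the left-hand side of the energy identity. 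The condition $\alpha\geq 1/2$ is precisely what makes the decomposition $\rho = \rho_{1/2,\beta}\cdot\rho_{2\alpha-1/2,\beta}$ into admissible weights possible, and hence what yields uniqueness in the class $X^{1,\alpha}(\Pi_T)$.
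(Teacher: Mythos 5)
Your formal computation is essentially the paper's proof: the same weighted energy estimate for $w=u-\widetilde u$ with $\rho=\rho_{2\alpha,2\beta}$, the same reduction of the nonlinear term to $\iint G_xw^2\rho$ and $\iint Gw^2\rho'$ by integration by parts, the same Cauchy--Schwarz step that peels $u_x$ off against one power of $(1+x_+)^{1/2}$ (which is exactly where $\alpha\geq 1/2$ enters, as you note), and the same use of \eqref{1.7} to reconstitute the two quantities on the left of \eqref{4.1}, followed by Young and Gronwall. The exponent bookkeeping for the $|u|^{b-1}$ factors (the paper's H\"older step with an exponent $p>4$) is glossed over, but that part is routine.

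There is, however, one genuine gap: the justification of the energy inequality for solutions that are merely weak. For $u,\widetilde u\in X^{1,\alpha}(\Pi_T)$ the third derivatives of $w$ are only distributions, so ``multiply by $2w\rho$ and integrate'' must be routed through the linear theory; and Lemma~\ref{L2.3}, which you invoke, does not apply here because with $\delta=0$ its divergence-form forcing term $f_{1x}$ disappears (it carries the factor $\delta^{1/2}$). The correct tool is Lemma~\ref{L2.4}, whose hypothesis is $f_1\rho_{\alpha+1/2,\beta}\in L_2(\Pi_T)$ --- half a power of weight \emph{more} than $w$ itself possesses. Writing the nonlinearity as $f_{1x}$ with $f_1=-Gw$ does not meet this: the linear part $-g'(0)w$ lies only in $L_\infty(0,T;L_2^\alpha)$ and gains nothing. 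This is precisely why the paper first sets $g_1(u)=g(u)-g'(0)u$, moves the linear contribution $-g'(0)v_x$ into $f_0\in L_1(0,T;L_2^\alpha)$, and checks, via $|g_1(u)|\leq c(u^2+|u|^{b+1})$ together with the bound $\iint|u|^{2q}\rho_{2\alpha+1,2\beta}\,dxdy\leq c\left(\iint(u^2+|Du|^2)(1+x_+)^{2\alpha}\,dxdy\right)^{q}$, that the superlinear remainder gains the missing half power, so that $f_1=-(g_1(u)-g_1(\widetilde u))$ satisfies the hypothesis of Lemma~\ref{L2.4} and the analogue of \eqref{4.5} is legitimately available. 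Once this decomposition (or an equivalent regularization-and-limit argument) is supplied, the rest of your estimates go through as written.
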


\begin{proof}
Let
\begin{equation}\label{4.2}
g_1(u)\equiv g(u)-g'(0)u.
\end{equation}
Then $g'(u)u_x=g'(0)u_x+(g_1(u))_x$, where $|g_1(u)|\leq c(u^2+|u|^{b+1})$. Note that for $q\geq 2$
\begin{multline*}
\iint |u|^{2q}\rho_{2\alpha+1,2\beta}\,dxdy 
\leq c\iint|u|^{2q}\cdot(1+x_+)^{2\alpha q}\,dxdy \\ 
\leq c_1\Bigl(\iint(u^2+|Du|^2)\cdot(1+x_+)^{2\alpha}\,dxdy\Bigr)^q,
\end{multline*}
that is $g_1(u)\rho_{\alpha+1/2,\beta}\in L_\infty(0,T;L_2)$. Besides that, it is obvious that $u_x\rho_{\alpha,\beta}\in L_\infty(0,T;L_2)$.

Denote $v\equiv u-\tilde{u}$, then the function $v$ is a solution to an equation
\begin{equation}\label{4.3}
v_t+v_{xxx}+v_{xyy}=\bigl(f-\widetilde{f})-(g(u)-g(\widetilde{u})\bigr)_x,
\end{equation}
satisfies an initial value condition
\begin{equation}\label{4.4}
v|_{t=0}=u_0-\tilde{u}_0,
\end{equation}
and corresponding boundary value conditions \eqref{1.3}.

The hypothesis of Lemma~\ref{L2.4} are true for this problem, therefore,
\begin{multline}\label{4.5}
\iint v^2\rho_{2\alpha,2\beta}\,dxdy
+\int_0^t\!\iint|Dv|^2\rho'_{2\alpha,2\beta}\,dxdyd\tau \leq \iint(u_0-\widetilde{u}_0)^2\rho_{2\alpha,2\beta}\,dxdy \\
+c\int_0^t\!\iint v^2\rho_{2\alpha,2\beta}\,dxdyd\tau 
+2\int_0^t\!\iint(f-\widetilde{f})v\rho_{2\alpha,2\beta}\,dxdyd\tau \\
-2g'(0)\int_0^t\!\iint v_xv\rho_{2\alpha,2\beta}\,dxdyd\tau
-2\int_0^t\!\iint\bigl(g_1(u)-g_1(\widetilde{u})\bigr)_xv\rho_{2\alpha,2\beta}\,dxdyd\tau. 
\end{multline}
It is easy to see that
\begin{multline*}
\Bigl|\iint\bigl(g_1(u)-g_1(\widetilde{u})\bigr)_xv\rho_{2\alpha,2\beta}\,dxdy\Bigr|  \\ 
\leq c\iint\bigl(1+|u|^{b-1}+|\widetilde{u}|^{b-1}\bigr) 
\bigl(|u_x|+|\tilde{u}_x|+|u|+|\widetilde{u}|\bigr)v^2\rho_{2\alpha,2\beta}\,dxdy.
\end{multline*}
Here
\begin{multline*}
\iint |u_x|v^2\rho_{2\alpha,2\beta}\,dxdy  \leq 
\Bigl(\iint u^2_x\frac{\rho_{2\alpha,2\beta}}{\rho'_{2\alpha,2\beta}}\,dxdy\Bigr)^{1/2}
\Bigl(\iint v^4\rho'_{2\alpha,2\beta}\rho_{2\alpha,2\beta}\,dxdy\Bigr)^{1/2} \\ 
\leq \Bigl(\iint(1+x_+)u_x^2\,dxdy\Bigr)^{1/2} 
\Bigl[\Bigl(\iint|Dv|^2\rho'_{2\alpha,2\beta}\,dxdy\Bigr)^{1/2}
\Bigl(\iint v^2\rho_{2\alpha,2\beta}\,dxdy\Bigr)^{1/2} \\ 
+\iint v^2\rho_{2\alpha,2\beta}\,dxdy\Bigr].
\end{multline*}

Further without loss of generality we assume that $b\geq2$. Denote $q=b-1$ then for any $p>4$
\begin{multline*}
\iint |u|^q |u_x|v^2\rho_{2\alpha,2\beta}\,dxdy   
\leq \Bigl(\iint|u|^{pq/(p-2)}|u_x|^{p/(p-2)}\frac{\rho_{2\alpha,2\beta}}{\rho'_{2\alpha,2\beta}} \,dxdy\Bigr)^{1-2/p} \\ \times
\Bigl(\iint |v|^p(\rho'_{2\alpha,2\beta})^{p/2-1}\rho_{2\alpha,2\beta}\,dxdy\Bigr)^{2/p},
\end{multline*}
where the first multiplier can be estimated as follows:
\begin{multline*}
c\Bigl(\iint |u|^{pq/(p-2)}(1+x_+)^{pq/(2(p-2))} \cdot
|u_x|^{p/(p-2)}(1+x_+)^{p/(2(p-2))}\,dxdy\Bigr)^{1-2/p} \\
\leq c_1\Bigl(\iint|u|^{2pq/(p-4)}(1+x_+)^{pq/(p-4)}\,dxdy\Bigr)^{(p-4)/(2p)}
\Bigl(\iint u^2_x\cdot(1+x_+)\,dxdy\Bigr)^{1/2}
\end{multline*}
and the second one in such a way:
$$
c\Bigl(\iint|Dv|^2\rho'_{2\alpha,2\beta}\,dxdy\Bigr)^{1-2/p}
\Bigl(\iint v^2\rho_{2\alpha,2\beta}\,dxdy\Bigr)^{2/p}
+c\iint v^2\rho_{2\alpha,2\beta}\,dxdy.
$$
The other terms can be estimated similarly and then \eqref{4.5} implies the required assertion.
\end{proof}

Finally we set a result on continuous dependence of weak solutions in more smooth spaces.

\begin{theorem}\label{T4.2}
Let the hypothesis of Theorem~\ref{T4.1} be true for $\alpha\geq3/4$ and also let the hypothesis of Theorem~\ref{T1.2} be true for the functions $u_0$, $\widetilde{u}_0$, $f$,$\widetilde{f}$ when $y=0$ and $y=L$. Then for any $\beta>0$
\begin{multline}\label{4.6}
\bigl\| |D(u-\widetilde{u})|\rho_{\alpha,\beta}(x)\bigr\|_{L_\infty(0,T;L_2)}+
\bigl\| |D^2(u-\widetilde{u})|\rho_{\alpha-1/2,\beta}\bigr\|_{L_2(\Pi_T)} \\
\leq c\left(\|u_0-\widetilde{u}_0\|_{H^{1,\alpha}}+
\|f-\widetilde{f}\|_{L_1(0,T;H^{1,\alpha})}\right),
\end{multline}
where the constant $c$ depends on the norms of the functions $u$ and $\widetilde{u}$ in $X^{1,\alpha}(\Pi_T)$.
\end{theorem}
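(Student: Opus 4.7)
The strategy is to upgrade the argument of Theorem~\ref{T4.1} by one order of regularity. Whereas Theorem~\ref{T4.1} applied the weighted $L_2$-estimate of Lemma~\ref{L2.4} to the difference equation \eqref{4.3}, here the plan is to carry out the weighted $H^1$-energy method behind Lemma~\ref{L2.7} (the case $\delta=0$) applied to the same equation, with the bounds already furnished by Theorem~\ref{T4.1} serving as inputs for the nonlinear terms.

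Setting $v=u-\widetilde u$, I would write the analogue of inequality \eqref{2.8} (with $\delta=0$) for $v$ satisfying \eqref{4.3}, taking the weight $\rho\equiv\rho_{2\alpha,2\beta}$. The $f-\widetilde f$ contribution fits directly into the framework of Lemma~\ref{L2.7}. The nonlinear contribution $-(g(u)-g(\widetilde u))_x$ is analyzed after writing $g(u)-g(\widetilde u)=G_1v$ and $g'(u)-g'(\widetilde u)=G_2v$, where
\[
|G_1|\le c(1+|u|^b+|\widetilde u|^b),\qquad |G_2|\le c(1+|u|^{b-1}+|\widetilde u|^{b-1})
\]
by hypothesis \eqref{1.6}. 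Expanding $(g(u)-g(\widetilde u))_x=g'(u)v_x+G_2\,v\,\widetilde u_x$ and integrating against the test function $-2[(v_x\rho)_x+v_{yy}\rho]$ produces a finite list of cubic-or-higher products. Each piece involves either a second derivative of $v$, which can be absorbed with a small constant into the local-smoothing term $\int_0^t\!\iint|D^2v|^2\rho'\,dxdyd\tau$ on the left-hand side via Cauchy--Schwarz, or a factor of $\rho'$, which converts it into a Gronwall-friendly lower-order expression.

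These pieces would then be estimated exactly in the spirit of \eqref{3.15}--\eqref{3.17} in the existence proof of Theorem~\ref{T1.2}. The interpolating inequality \eqref{1.7}, with $\rho_1=\rho'_{2\alpha,2\beta}$ and $\rho_2=\rho_{2\alpha,2\beta}$, together with the weight relation \eqref{2.6}, reduces each nonlinear contribution to a sum of the form
\[
\varepsilon\int_0^t\!\iint|D^2v|^2\rho'\,dxdyd\tau+C(1+\|u\|^N+\|\widetilde u\|^N)\int_0^t\!\iint|Dv|^2\rho\,dxdyd\tau,
\]
where the norms of $u,\widetilde u$ are taken in $L_\infty(0,T;H^{1,\alpha})$. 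The high-$L_p$ norms of $u,\widetilde u$ arising in the interpolation are finite because $u,\widetilde u\in L_\infty(0,T;H^{1,\alpha})$ and Sobolev embedding on $\Sigma$ gives $H^1\hookrightarrow L_p$ for every finite $p$. The $L_2$-type weighted bound on $v$ from Theorem~\ref{T4.1} controls the non-absorbable factors of $v$. Gronwall's inequality then delivers \eqref{4.6}.

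The main obstacle, already present in the existence argument behind \eqref{3.15}, is the highest-order product $G_2\,v\,\widetilde u_x\,v_{xx}\,\rho$: its absorption into $|D^2v|^2\rho'$ introduces a loss factor $\rho/\rho'\sim(1+x_+)$ from \eqref{2.6}, and the interpolation \eqref{1.7} must recover precisely that loss from the available bounds on $v$ in the spaces provided by Theorem~\ref{T4.1}. Tracking the exponents through this interpolation produces exactly the threshold $\alpha\ge 3/4$ imposed in the hypothesis of the theorem.
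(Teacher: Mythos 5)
Your overall strategy -- a weighted $H^1$ energy estimate for the difference in the spirit of Lemma~\ref{L2.7}, with the nonlinear terms reduced by the interpolation inequality \eqref{1.7} and the bound of Theorem~\ref{T4.1}, followed by Gronwall -- is indeed the paper's strategy. But there is a concrete gap at the very first step: you propose to apply the $\delta=0$ energy lemma directly to equation \eqref{4.3}, whose right-hand side contains $-(g(u)-g(\widetilde u))_x = -g'(0)v_x - (g_1'(u)u_x-g_1'(\widetilde u)\widetilde u_x)$ with $g_1(u)=g(u)-g'(0)u$. The constant-coefficient piece $g'(0)v_x$ does not satisfy the hypotheses of Lemma~\ref{L2.7}: to place it in $f_0$ you would need $v_x\rho_{\alpha,\beta}\in L_1(0,T;H^1)$, i.e.\ second derivatives of $v$ with the full weight $\rho_{\alpha,\beta}$, which is not available a priori; to place it in $f_1$ you would need $v_x\rho_{\alpha+1/2,\beta}\in L_2(\Pi_T)$, which is half a power of $(1+x_+)$ more decay than $v\in X^{1,\alpha}(\Pi_T)$ provides. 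Since the solutions are only weak, the formal observation that this term would integrate to the harmless quantity $-g'(0)\iint|Dv|^2\rho'\,dxdy$ cannot be used without a justification of the integration by parts, and that justification is exactly what fails. The paper removes this obstruction by the Galilean change of variables $w(t,x,y)=v(t,x+g'(0)t,y)$, after which the right-hand side of the transformed equation \eqref{4.7} contains only $g_1'(u)u_x-g_1'(\widetilde u)\widetilde u_x$, and each of $g_1'(u)u_x$, $g_1'(\widetilde u)\widetilde u_x$ is shown to lie in $L_2(\Pi_T)$ with weight $\rho_{\alpha+1/2,\beta}$, so that Lemma~\ref{L2.7} applies with this expression as $f_1$. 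Your plan is missing this device (or an equivalent substitute), and without it the key energy inequality \eqref{4.8} is not established.

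A secondary inaccuracy: you locate the source of the threshold $\alpha\ge 3/4$ in the absorption of $G_2\,v\,\widetilde u_x\,v_{xx}\,\rho$ into the smoothing term. In the paper the restriction enters earlier, in verifying the hypothesis $g_1'(u)u_x\rho_{\alpha+1/2,\beta}\in L_2(\Pi_T)$ of Lemma~\ref{L2.7}: one must split $\rho_{2\alpha+1,2\beta}\le c\,(1+x_+)^{3/2}\rho_{2\alpha-1/2,2\beta}$ and pay the factor $(1+x_+)^{3/2}$ with the decay of $|u|^{2q}$, which requires $2q\alpha\ge 3/2$ with $q\ge1$, i.e.\ $\alpha\ge 3/4$. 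Also note that the paper keeps $g_1'(u)u_x$ and $g_1'(\widetilde u)\widetilde u_x$ whole at the stage of applying the lemma and only later splits the resulting integral into the $v$, $v_x$ pieces you describe; your decomposition $g'(u)v_x+G_2\,v\,\widetilde u_x$ again isolates the problematic $v_x$ term with no spare decay. These points are all repairable, but as written the proposal does not go through.
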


\begin{proof}
Let $v\equiv u-\widetilde{u}$, $w(t,x,y)\equiv v(t,x+g'(0)t,y)$. Then the function $w$ satisfies an equation
\begin{equation}\label{4.7}
w_t+w_{xxx}+w_{xyy}=(f-\widetilde{f})-(g'_1(u)u_x-g'_1(\widetilde{u})\widetilde{u}_x),
\end{equation}
where the function $g_1$ is given by formula \eqref{4.2}, and all the terms in the right part of \eqref{4.7} are considered in the point $(t,x+g'(0)t,y)$, satisfies initial value condition \eqref{4.4} and corresponding boundary value conditions \eqref{1.3}.

Note that $|g'_1(u)u_x|\leq c(|u|+|u|^b)|u_x|$ and for $q\geq 1$
\begin{multline*}
\iint |u|^{2q}u_x^2\rho_{2\alpha+1,2\beta} \,dxdy 
\leq c\iint |u|^{2q}(1+x_+)^{3/2}\cdot u_x^2\rho_{2\alpha-1/2,2\beta}\,dxdy \\
\leq c_1\Bigl(\iint |u|^{4q}(1+x_+)^{4q\alpha}\,dxdy\Bigr)^{1/2}
\Bigl(\iint u_x^4\rho_{2\alpha-1,2\beta}^2\,dxdy\Bigr)^{1/2} \\
\leq c_2\Bigl(\iint(u^2+|Du|^2)\cdot(1+x_+)^{2\alpha}\,dxdy\Bigr)^q 
\Bigl[\Bigl(\iint |Du_x|^2\rho_{2\alpha-1,2\beta}\,dxdy)^{1/2} \\ \times
\Bigl(\iint u_x^2\rho_{2\alpha,2\beta}\,dxdy\Bigr)^{1/2} 
+\iint u^2_x\rho_{2\alpha,2\beta}\,dxdy\Bigr]
,
\end{multline*}
that is $g'_1(u)u_x\rho_{\alpha+1/2,\beta}\in L_2(\Pi_T)$. Thus, the hypothesis of Lemma~\ref{L2.7} are true for the considered problem and, therefore,
\begin{multline}\label{4.8}
\iint |Dw|^2\rho_{2\alpha,2\beta}\,dxdy+
\frac{1}{2}\int_0^t\! \iint |D^2w|^2\rho'_{2\alpha,2\beta}\, dxdyd\tau \\
\leq \iint |Du_0|^2\rho_{2\alpha,2\beta}\,dxdy+
c\int_0^t\!\iint |Dw|^2\rho_{2\alpha,2\beta}\,dxdyd\tau \\
+2\int_0^t\! \iint\bigl[(f-\widetilde{f})_x w_x+(f-\widetilde{f})_y w_y\bigr]\rho_{2\alpha,2\beta}\,dxdyd\tau \\
+2\int_0^t\!\iint \bigl(g'_1(u)u_x-g'_1(\widetilde{u})\widetilde{u}_x\bigr) \bigl[(w_x\rho_{2\alpha,2\beta})_x+w_{yy}\rho_{2\alpha,2\beta}\bigr] \,dxdyd\tau.
\end{multline}
The last integral in the right part of \eqref{4.7} is not greater than
\begin{multline*}
\varepsilon\int_0^t\!\iint (w_{xx}^2+w_{yy}^2+w_{x}^2)\rho'_{2\alpha,2\beta}\,dxdyd\tau \\ +c(\varepsilon)\int_0^t\!\iint\bigl(1+|u|^{2(b-1)}+
|\widetilde{u}|^{2(b-1)}\bigr)\ \bigl(u_x^2w^2+\widetilde{u}^2w_x^2\bigr)\rho_{2\alpha+1,2\beta}\,dxdyd\tau,
\end{multline*}
where $\varepsilon>0$ can be chosen arbitrarily small.
Here
\begin{multline*}
\iint u_x^2w^2\rho_{2\alpha+1,2\beta}\,dxdy \\
\leq c\Bigl(\iint u_x^4\rho_{2\alpha-1,\beta/2}\rho_{2\alpha,\beta/2}\,dxdy\Bigr)^{1/2}
\Bigl(\iint w^4\rho_{2\alpha,2\beta}\rho_{2\alpha,\beta}\,dxdy\Bigr)^{1/2} \\
\leq c_1\Bigl[\Bigr(\iint|Du_x|^2\rho_{2\alpha-1,\beta/2}\,dxdy\Bigr)^{1/2}
\Bigl(\iint u^2_x\rho_{2\alpha,\beta/2}\,dxdy\Bigr)^{1/2} \\
+\iint u^2_x\rho_{2\alpha,\beta/2}\,dxdy\Bigr] \\ 
\times\Bigl[\Bigl(\iint|Dw|^2\rho_{2\alpha,2\beta}\,dxdy\Bigr)^{1/2}
\Bigl(\iint w^2\rho_{2\alpha,\beta}\,dxdy\Bigr)^{1/2}
+\iint w^2\rho_{2\alpha,\beta}\,dxdy\Bigr] \\
\leq c_2\iint|Dw|^2\rho_{2\alpha,2\beta}\,dxdy+c_2\Bigl[\iint |Du_x|^2\rho_{2\alpha-1,\beta/2}\,dxdy+1\Bigr]\iint w^2\rho_{2\alpha,\beta}\,dxdy,
\end{multline*}
where the first multiplier in the last term belongs to the space $L_1(0,T)$ and the second one can be estimated uniformly with respect to $t$ according to inequality~\eqref{4.1}.
Similarly, if $q\geq1$ then
\begin{multline*}
\iint|u|^{2q}u_x^2w^2\rho_{2\alpha+1,2\beta}\,dxdy \\
\leq c\Bigl(\iint |u|^{8q}\,dxdy\Bigr)^{1/4}
\Bigl(\iint u_x^4\rho_{2\alpha-1,\beta/4}\rho_{2\alpha,\beta/4}\,dxdy\Bigr)^{1/2}  \\ 
\times \Bigl(\iint w^8\rho^3_{2\alpha,2\beta}\rho_{2\alpha,\beta}\,dxdy\Bigr)^{1/4} \\
\leq c_1\Bigl[\Bigl(\iint|Du_x|^2\rho_{2\alpha-1,\beta/4}\,dxdy\Bigr)^{1/2}
\Bigl(\iint u^2_x\rho_{2\alpha,\beta/4}\,dxdy\Bigr)^{1/2} \\
+\iint u^2_x\rho_{2\alpha,\beta/4}\,dxdy\Bigr] \\
\times \Bigl[\Bigl(\iint |Dw|^2\rho_{2\alpha,2\beta}\,dxdy\Bigr)^{3/4}
\Bigl(\iint w^2\rho_{2\alpha,\beta}\,dxdy\Bigr)^{1/4}
+\iint w^2\rho_{2\alpha,\beta}\,dxdy\Bigr] \\
\leq c_2\Bigl[\iint|Du_x|^2\rho_{2\alpha-1,\beta/4}\,dxdy+1\Bigr]\cdot
\Bigl[\iint|Dw|^2\rho_{2\alpha,2\beta}\,dxdy+\iint w^2\rho_{2\alpha,\beta}\,dxdy\Bigr],
\end{multline*}
where the first multiplier in the right part belongs to $L_1(0,T)$.
Finally, for $q\geq1$
\begin{multline*}
\iint |\tilde{u}|^{2q}w^2_x\rho_{2\alpha+1,2\beta} \,dxdy \\
\leq c\Bigl(\iint|\tilde{u}|^{4q}\cdot(1+x_+)^{4\alpha q}\,dxdy\Bigr)^{1/2}
\Bigl(\iint w^4_x\rho_{2\alpha-1,2\beta}\rho_{2\alpha,2\beta}\,dxdy\Bigr)^{1/2} \\
\leq c_1\Bigl(\iint|Dw_x|^2\rho'_{2\alpha,2\beta}\,dxdy\Bigr)^{1/2}
\Bigl(\iint w_x^2\rho_{2\alpha,2\beta}\,dxdy\Bigr)^{1/2} \\
+c_1\iint w^2_x\rho_{2\alpha,2\beta}\,dxdy.
\end{multline*}
As a result, the statement of the theorem follows from inequality \eqref{4.7}.
\end{proof}

\begin{remark}\label{R4.1}
Similarly to the methods of \cite{F89} one can obtain uniqueness and continuous dependence of weak solutions to problem \eqref{1.1}--\eqref{1.3} in the space $L_2^\alpha$ for $\alpha\geq\displaystyle{\frac{1}{2}(1+\frac{1}{b})}$ under the rate growth restriction 
$|g'(u)|\leq c|u|^b$ for $b\in(0,1)$, which, unfortunately, excludes Zakharov--Kuznetsov equation itself.
\end{remark}


\begin{thebibliography}{30}

\bibitem{F88} A.V.~Faminskii, {\it Cauchy problem for the Korteweg--de~Vries equation and its generalizations}, Tr. semin. im. I.G. Petrovskogo, {\bf 13}(1988), 56--105. English transl. in J. Soviet Math., {\bf 50}(1990), 1381--1420.

\bibitem{F89} A.V.~Faminskii, {\it The Cauchy problem for quasilinear equations of odd order}, Mat. Sb., {\bf 180}(1989), 1183--1210. English transl. in Math. USSR-Sb., {\bf 68}(1991), 31--59.

\bibitem{F95} A.V.~Faminskii {\it The Cauchy problem for the Zakharov--Kuznetsov equation}, Differ. Uravn., {\bf 31}(1995), 1070--1081. English transl. in Differential Equ., 
{\bf 31}(1995), 1002--1012.

\bibitem{FB1} A.V.~Faminskii and E.S.~Baykova, {\it Weak solutions to a mixed problem with two boundary conditions for generalized Zakharov--Kuznetsov equation}, in "Neklassicheskie Uravneniya Mat.Fiz." (ed.~A.I.Kozhanov), Novosibirsk (2007), 298--306 (in Russian).

\bibitem{FB2} A.V.~Faminskii and I.Yu.~Bashlykova, {\it Weak solution to one initial-boundary value problem with three boundary conditions for quasilinear equations of the third order}, Ukrainian Math. Bull., {\bf 5}(2008), 83--98.

\bibitem{F08} A.V.~Faminskii, {\it Well-posed initial-boundary value problems for the Zakharov--Kuznetsov equation}, Electronic J. Differential Equ., no.~127(2008), 1--23.

\bibitem{F12} A.V.~Faminskii, {\it Weak solutions to initial-boundary-value problems for quasilinear evolution equations of an odd order}, Adv. Differential Equ., {\bf 17}(2012), 421--470.

\bibitem{K} T.~Kato, {\it On the Cauchy problem for the (generalized) Korteweg--de~Vries equation}, Stud. Appl. Math., Adv. Math. Suppl. Stud., {\bf 8}(1983), 93--128.

\bibitem{KPV} C.E.~Kenig, G.~Ponce and L.~Vega, {\it Well-posedness of the initial value problem for the Korteweg--de~Vries equation}, J. Amer. Math. Soc., {\bf 4}(1991), 323--347.

\bibitem{KF} S.N.~Kruzhkov, A.V.~Faminskii, {\it Generalized solutions of the Cauchy problem for the Korteweg--de~Vries equation}, Math. Sb., {\bf 120(162)}(1983), 396--425. English transl. in Sb. Math., {\bf 48}(1984), 391--421.

\bibitem{LT} N.A.~Larkin and E.~Tronco, {\it Regular solutions of the 2D Zakharov--Kuznetsov equation on a half-strip}, J. Differential Equ., {\bf 254}(2013), 81--101.

\bibitem{LP} F.~Linares and A.~Pastor, {\it Well-posedness for the 2D modified Zakharov--Kuznetsov equation}, J. Funct. Anal., {\bf 260}(2011), 1060--1085.

\bibitem{LPS} F.~Linares, A.~Pastor and J.-C.~Saut, {\it Well-posedness for the Zakharov--Kuznetsov equation in a cylinder and on the background of a KdV soliton}, Comm. Partial Differential Equ., {\bf 35}(2010), 1674--1689.

\bibitem{S1} J.-C.~Saut, {\it Sur quelques generalizations de l'equation de Korteweg--de~Vries}, J. Math. Pures Appl., {\bf 58}(1979), 21--61.

\bibitem{ST} J.-C.~Saut and R.~Temam, {\it An initial boundary value problem for the Zakharov--Kuznetsov equations}, Adv. Differential Equ., {\bf 15}(2010), 1001--1031.

\bibitem{S2} J.~Simon, {\it Compact sets in the space $L^p(0,T;B)$}, Ann. Mat. Pura Appl.,  {\bf 146}(1987), 65--96.

\bibitem{ZK} V.E.~Zakharov and E.A.~Kuznetsov, {\it On threedimentional solutions}, Zhurnal Eksp. Teoret. Fiz., {\bf 66}(1974), 594--597. English transl. in Soviet Phys. JETP, {\bf 39}(1974), 285--288.

\end{thebibliography}
\end{document}